\newcommand{\R}{\mathbb{R}}
\newcommand{\C}{\mathbb{C}}
\newcommand{\id}{\operatorname{Id}}
\newcommand{\msf}[1]{\mathsf{#1}}
\newcommand{\mc}[1]{\mathcal{#1}}
\newcommand{\cop}{\mc C}
\theoremstyle{definition}
\newtheorem{thm}{Theorem}[section]
\newtheorem{definition}[thm]{Definition}
\newtheorem{proposition}[thm]{Proposition}
\newtheorem{lemma}[thm]{Lemma}
\newtheorem{remark}[thm]{Remark}
\author{Kyle Broder}
\address{The University of Queensland,  St. Lucia,  QLD 4067, Australia}
\email{k.broder@uq.edu.au}
\author{Jan Nienhaus}
\address{University of California, Los Angeles, 520 Portola Plaza, CA, 90095}
\email{nienhaus@math.ucla.edu}
\author{Peter Petersen}
\address{University of California, Los Angeles, 520 Portola Plaza, CA, 90095}
\email{petersen@math.ucla.edu}
\author{James Stanfield}
 \address{Cluster of Excellence Mathematics M\"unster, Orléans-Ring 10, 48149 Münster, Germany}
 \email{james.stanfield@uni-muenster.de} 
\author{Matthias Wink}
\address{University of California, Santa Barbara, South Hall, CA, 93106}
\email{wink@math.uscb.edu}
\keywords{K\"ahler manifolds, Bochner technique, Vanishing theorems}
\subjclass[2020]{32Q10, 32Q15, 32Q20, 53C21, 53C55}
\begin{document}
\newcommand{\Ext}{\bigwedge\nolimits}
\newcommand{\Div}{\operatorname{div}}
\newcommand{\Hol} {\operatorname{Hol}}
\newcommand{\diam} {\operatorname{diam}}
\newcommand{\Scal} {\operatorname{Scal}}
\newcommand{\scal} {\operatorname{scal}}
\newcommand{\Ric} {\operatorname{Ric}}
\newcommand{\Hess} {\operatorname{Hess}}
\newcommand{\grad} {\operatorname{grad}}
\newcommand{\Rm} {\operatorname{Rm}}
\newcommand{ \Rmzero } {\mathring{\Rm}}
\newcommand{\Rc} {\operatorname{Rc}}
\newcommand{\Curv} {S_{B}^{2}\left( \mathfrak{so}(n) \right) }
\newcommand{ \tr } {\operatorname{tr}}
\newcommand{ \Riczero } {\mathring{\Ric}}
\newcommand{ \Ad } {\operatorname{Ad}}
\newcommand{ \dist } {\operatorname{dist}}
\newcommand{ \rank } {\operatorname{rank}}
\newcommand{\Vol}{\operatorname{Vol}}
\newcommand{\dVol}{\operatorname{dVol}}
\newcommand{ \zitieren }[1]{ \hspace{-3mm} \cite{#1}}
\newcommand{ \pr }{\operatorname{pr}}
\newcommand{\diag}{\operatorname{diag}}
\newcommand{\Lagr}{\mathcal{L}}
\newcommand{\av}{\operatorname{av}}
\newcommand{ \floor }[1]{ \lfloor #1 \rfloor }
\newcommand{ \ceil }[1]{ \lceil #1 \rceil }
\newcommand{\Sym} {\operatorname{Sym}}
\newcommand{\bcirc}{ \ \bar{\circ} \ }
\newcommand{\sign}[1]{\operatorname{sign}(#1)}
\newcommand{\cone}{\operatorname{cone}}
\newcommand{\pbd}{\varphi_{bar}^{\delta}}
\newcommand{\End}{\operatorname{End}}

\renewcommand{\labelenumi}{(\alph{enumi})}
\newtheorem{maintheorem}{Theorem}[]
\renewcommand*{\themaintheorem}{\Alph{maintheorem}}
\newtheorem*{remark*}{Remark}

\vspace*{-1cm}

\title{Vanishing theorems for Hodge numbers \\
and the Calabi curvature operator}

\begin{abstract}
It is shown that a compact $n$-dimensional K\"ahler manifold with $\frac{n}{2}$-positive Calabi curvature operator has the rational cohomology of complex projective space. For even $n,$ this is sharp in the sense that the complex quadric with its symmetric metric has $\frac{n}{2}$-nonnegative Calabi curvature operator, yet $b_n =2.$ Furthermore, the compact K\"ahler manifolds with an $\frac{n}{2}$-nonnegative Calabi curvature operator are classified. 

In addition, the previously known results for the K\"ahler curvature operator are improved when the metric is K\"ahler--Einstein.
\end{abstract}

\maketitle

\pagestyle{fancy}
\fancyhead{} 
\fancyhead[CE]{Vanishing theorems for Hodge numbers and the Calabi curvature operator}
\fancyhead[CO]{Kyle Broder, Jan Nienhaus, Peter Petersen, James Stanfield, Matthias Wink}
\fancyfoot{}
\fancyfoot[CE,CO]{\thepage}

\tableofcontents

\section{Introduction}
\noindent The Bochner technique is an analytic method for understanding how curvature interacts with the topology, and in particular the cohomology, of a Riemannian manifold. Its role in proving vanishing theorems in K\"ahler geometry goes back to Bochner \cite{Bochner}, who showed that a compact K\"ahler manifold with $k$-positive Ricci curvature does not admit holomorphic $p$-forms for $k \leq p \leq n$. This has inspired several results in this direction, see for example \cite{BishopGoldberg,Bouche, Demailly,GreeneWu,KobayashiWu, NiZhengComparisonAndVanishing, NiZhengPositivityAndKodaira, Yang}.

Recall that a self-adjoint operator with eigenvalues $\lambda_1 \leq \ldots \leq \lambda_m$ is said to be \textit{$k$-nonnegative} for some $k \geq 1$ if $$\lambda_1 + \ldots + \lambda_{\lfloor k \rfloor} + (k - \lfloor k \rfloor) \lambda_{\lfloor k \rfloor +1} \ \geq \ 0,$$ and \textit{$k$-positive} if the above inequality is strict. 

In \cite{PW2}, the third and fifth named authors obtained a cohomological characterization of $\mathbf{P}^n$ in terms of eigenvalues of the K\"ahler curvature operator $\mathfrak{K} \colon \Omega^{1,1}_X \to \Omega^{1,1}_X.$ Namely, if $X^n$ is a compact K\"ahler manifold with $\left(3-\frac{2}{n}\right)$-positive K\"ahler curvature operator, then $X$ has the rational cohomology of $\mathbf{P}^n.$ The results in \cite{PW2} in fact establish vanishing results for individual Hodge numbers $h^{p,q}(X) := \dim_{\mathbf{C}} H^q(X, \Omega_X^p).$ In particular, for $(p,p)$-forms one only requires $(n+1-p)$-positive K\"ahler curvature operator to conclude $h^{p,p}(X)=1$ while the curvature conditions for $(p,0)$-forms are stronger than those required by  Bochner.

In the present article, we overcome this obstacle by considering the \emph{Calabi curvature operator} $\mathcal{C} \colon \mathcal{S}_X^{2,0} \to \mathcal{S}_X^{2,0}$, defined by 
\begin{align*}
g(\mathcal{C}(\xi \odot \mu), \bar{\eta} \odot \bar{\nu}) := 4 R(\xi, \bar{\eta}, \bar{\nu}, \mu), 
\end{align*}
where $\xi \odot \mu : = \xi \otimes \mu + \mu \otimes \xi \in \mathcal{S}^{2,0}_X := T^{1,0}X \odot T^{1,0}X$.

The Calabi curvature operator first appears in a paper of Calabi--Vesentini \cite{CV} and subsequently appeared in \cite{BCV,OT, Sitaramayya}. Our first main result is the following.

\begin{maintheorem}
\label{thm:main-theorem-1}
Every compact $n$-dimensional K\"ahler manifold with $\frac{n}{2}$-positive Calabi curvature operator has the rational cohomology of $\mathbf{P}^n$.
\end{maintheorem}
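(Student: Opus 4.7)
The plan is to prove that every primitive harmonic $(p,q)$-form with $p+q>0$ vanishes on $X$; combined with the Hodge decomposition and the hard Lefschetz theorem, this forces $h^{p,p}(X)=1$ for $0\leq p\leq n$ and $h^{p,q}(X)=0$ for $p\neq q$, yielding the rational cohomology of $\mathbf{P}^n$.

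For a harmonic $(p,q)$-form $\alpha$, pairing the Weitzenb\"ock identity against $\alpha$ and integrating yields
\begin{equation*}
0 \ = \ \int_X |\nabla \alpha|^2 \, \dVol + \int_X \langle \mathfrak{R}(\alpha), \alpha \rangle \, \dVol,
\end{equation*}
where $\mathfrak{R}$ is the Bochner curvature endomorphism on $\Omega^{p,q}_X$. The problem thus reduces to proving the pointwise inequality $\langle \mathfrak{R}(\alpha), \alpha \rangle(x) > 0$ whenever $\alpha(x) \neq 0$ is primitive and $\mathcal{C}_x$ is $\frac{n}{2}$-positive.

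The central step is an algebraic identity expressing $\mathfrak{R}$ in terms of $\mathcal{C}$. Using the defining relation $g(\mathcal{C}(\xi \odot \mu), \bar{\eta} \odot \bar{\nu}) = 4 R(\xi, \bar{\eta}, \bar\nu, \mu)$ together with the K\"ahler symmetry $R_{i\bar j k \bar l} = R_{k \bar j i \bar l}$, I would construct, for each nonzero $\alpha$ at a fixed point $x$, a finite family $\{\Phi_I\}$ of symmetric $(2,0)$-tensors on $T_x^{1,0}X$ such that
\begin{equation*}
\langle \mathfrak{R}(\alpha), \alpha \rangle(x) \ = \ \sum_I g(\mathcal{C}(\Phi_I), \bar\Phi_I).
\end{equation*}
A general linear algebra lemma for $k$-positive self-adjoint operators would then bound this sum below by the smallest $k$-eigenvalue sum of $\mathcal{C}$, provided the family $\{\Phi_I\}$ spans a subspace of dimension at most $\lfloor n/2 \rfloor$. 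Primitivity of $\alpha$ enters here, likely through a cancellation forcing contributions involving $\omega$ to drop out of the sum.

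The principal obstacle is arranging the decomposition so that precisely $\frac{n}{2}$ eigenvalues of $\mathcal{C}$ suffice; the complex quadric example in the abstract shows no slack is available. This is likely to require separate treatment of $(p,0)$-forms, where only the Ricci-type trace of $\mathcal{C}$ enters, and of genuinely mixed $(p,q)$-forms, where careful bookkeeping of symmetric versus antisymmetric contributions of the indices cuts the effective dimension count in half. Once pointwise positivity is established, $\alpha \equiv 0$, and hard Lefschetz combined with Hodge symmetry yields the theorem.
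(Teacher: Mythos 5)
Your overall skeleton is the same as the paper's: reduce to the vanishing of primitive harmonic forms via Hodge theory and the Lefschetz decomposition, and express the Weitzenb\"ock curvature term through the Calabi operator. Indeed the identity you hope for exists and is the paper's key Lemma: for a \emph{real} form $\varphi$ one has $g(\Ric_L(\varphi),\varphi)=2\sum_{\nu}\sigma_{\nu}|\Sigma_{\nu}\varphi|^2 = 2\,g\bigl(\mathcal{C}(\varphi^{\bigodot^{2}V^{1,0}}),\overline{\varphi^{\bigodot^{2}V^{1,0}}}\bigr)$, where $\{\Sigma_\nu\}$ is a unitary eigenbasis of $\mathcal{C}$. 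However, the mechanism you propose for extracting positivity from $\frac{n}{2}$-positivity is a genuine gap. The sum above runs over a full eigenbasis of $\mathcal{C}$, i.e.\ over all $\binom{n+1}{2}$ directions of $\bigodot^2 V^{1,0}$; there is no family $\{\Phi_I\}$ spanning a subspace of dimension $\le \lfloor n/2\rfloor$, and $k$-positivity gives you nothing about such a spread-out weighted sum unless you control how large each individual weight can be relative to the total. What the paper actually proves (and what your outline lacks any substitute for) is the weight-principle estimate $|S\varphi|^2\le \frac{1}{\Upsilon_{p,q}}|S|^2\,|\varphi^{\bigodot^{2}V^{1,0}}|^2$ with $\Upsilon_{p,q}\ge \frac n2$. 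This requires two nontrivial computations: the norm identity $|\varphi^{\bigodot^{2}V^{1,0}}|^2=\frac14\bigl((p+q)(n+1)-2pq\bigr)|\varphi|^2$ for primitive real forms (this is exactly where primitivity enters, through the vanishing of $\Lambda\varphi$, not through a cancellation of $\omega$-contributions in a span), and the pointwise bound $|S\psi|^2\le\bigl(\tfrac12+\min(p,q,\tfrac{\sqrt{pq}}2)\bigr)|S|^2|\psi|^2$, whose proof involves normalizing $S=\sum_a\rho_a Z_a\otimes Z_a$, combinatorial bookkeeping of index replacements $a\mapsto\bar a$, and a maximization (Lagrange multiplier) argument. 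None of this is ``a general linear algebra lemma''; it is the technical heart of the theorem and the reason the sharp constant $\frac n2$ (attained by the quadric) comes out.

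A second, related issue: you work with a complex $(p,q)$-form $\alpha$ directly, but elements $S\in\bigodot^2V^{1,0}$ act by shifting type, $\Omega^{p,q}\to\Omega^{p-1,q+1}$, so the curvature term does not decompose nicely on $\Omega^{p,q}$ alone. The paper is forced to (and does) work with real forms in $\Omega^{p,q}_X\oplus\Omega^{q,p}_X$, and the third estimate in its main proposition exploits precisely the interplay between the $(p,q)$ and $(q,p)$ components of a real form to reach the constant $\frac12+\frac{\sqrt{pq}}2$; with $\alpha\in\Omega^{p,q}$ alone you would only get the weaker bounds and would not reach $\frac n2$ for all bidegrees. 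Finally, your reduction ``all primitive harmonic forms vanish $\Rightarrow$ rational cohomology of $\mathbf{P}^n$'' is fine, but the quantitative step you still owe is exactly the elementary inequality $\Upsilon_{p,q}\ge\frac n2$ for $1\le p+q\le n$, which is how the paper closes the argument.
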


For $n$ even, this result is sharp, as the complex quadric $\msf{SO}(n+2)/ \left(\msf{SO}(2) \times \msf{SO}(n) \right)$ has $\frac{n}{2}$-nonnegative Calabi curvature operator and Betti number $b_n=2.$ Moreover, for $n$ odd, the complex quadric is a rational cohomology $\mathbf{P}^n.$ 

Theorem~\ref{thm:main-theorem-1} was previously known for positive, i.e., $1$-positive, Calabi curvature operator \cite{OT}. In fact, since positive Calabi curvature operator implies positive holomorphic bisectional curvature, see Remark~\ref{rmk:NonNegativeCalabi}, K\"ahler manifolds with $\cop > 0$ are biholomorphic to $\mathbf{P}^n$ \cite{Mori,SiuYau}. This raises the question of whether a compact K\"ahler manifold with $\frac{n}{2}$-positive Calabi curvature operator is biholomorphic to $\mathbf{P}^n$.

The rigidity case of Theorem~\ref{thm:main-theorem-1} is established in the following result.

\begin{maintheorem}
\label{thm:main-theorem-2}
If $X^n$ is a compact K\"ahler manifold with $\frac{n}{2}$-nonnegative Calabi curvature operator, then one of the following holds: \begin{itemize}
\item[(i)] The holonomy is irreducible and $X$ has either the rational cohomology of $\mathbf{P}^n$ or is isometric to the complex quadric $\msf{SO}(n+2)/ \left(\msf{SO}(2) \times \msf{SO}(n) \right)$ with its symmetric metric.
\item[(ii)] The holonomy is reducible, and a finite cover is isometric to $\mathbf{T}^k \times Y$,  where $Y$ is a product of spaces that are biholomorphic to projective spaces.
\end{itemize}
\end{maintheorem}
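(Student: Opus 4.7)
The plan is to bifurcate according to the de Rham holonomy decomposition. In each case I would exploit the interaction between the Calabi curvature operator and the product structure.

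For the irreducible case, if $\mathcal{C}$ is strictly $\frac{n}{2}$-positive, then Theorem~\ref{thm:main-theorem-1} immediately yields that $X$ has the rational cohomology of $\mathbf{P}^n$. Otherwise, the sum of the smallest $\frac{n}{2}$ Calabi eigenvalues vanishes, producing a nontrivial null space for the associated Bochner quadratic form. My plan is to trace back through the Bochner argument of Theorem~\ref{thm:main-theorem-1} to obtain parallel sections of the relevant bundle, and then to use Berger's classification of irreducible K\"ahler holonomies to force the holonomy representation to coincide with that of a rank-two Hermitian symmetric space; a final identification step should pin down the complex quadric $\mathsf{SO}(n+2)/(\mathsf{SO}(2)\times\mathsf{SO}(n))$ as the unique possibility.

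For the reducible case, I would apply the de Rham decomposition, together with Cheeger--Gromoll to handle the compact flat factor, to write a finite cover of $X$ as $\mathbf{T}^k \times Y_1 \times \cdots \times Y_m$ with each $Y_i$ having irreducible holonomy. The key structural lemma is that, in a K\"ahler product $X_1 \times X_2$, the mixed component $T^{1,0}X_1 \otimes T^{1,0}X_2 \subset \mathcal{S}^{2,0}_{X_1 \times X_2}$ lies in the kernel of the Calabi operator because the curvature tensor does not couple the two factors. Hence the spectrum of $\mathcal{C}_{X_1 \times X_2}$ is the disjoint union of the spectra of $\mathcal{C}_{X_1}$ and $\mathcal{C}_{X_2}$ together with $n_1 n_2$ zero eigenvalues. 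A short counting argument, using $n_1 n_2 + 1 \geq \frac{n_1 + n_2}{2}$ for $n_i \geq 1$, then shows that a single negative eigenvalue in either factor would survive among the smallest $\frac{n}{2}$ eigenvalues of $\mathcal{C}_X$ and force the sum to be strictly negative. Iterating, each non-flat irreducible factor $Y_i$ must satisfy $\mathcal{C}_{Y_i} \geq 0$ as an operator.

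Since $\mathcal{C}_{Y_i} \geq 0$ is equivalent to nonnegative holomorphic bisectional curvature, I would invoke Mok's strong rigidity to classify each $Y_i$ as either $\mathbf{P}^{n_i}$ or an irreducible compact Hermitian symmetric space of rank at least two. The latter possibilities should be ruled out by direct computation on each entry of the Cartan list: every irreducible compact Hermitian symmetric space of rank at least two has a strictly negative Calabi eigenvalue. In the remaining projective case, irreducibility of the holonomy together with $\mathcal{C}_{Y_i} \geq 0$ upgrades to $\mathcal{C}_{Y_i} > 0$, because any null direction would force a parallel reduction of the holonomy; the Mori--Siu--Yau solution of the Frankel conjecture then provides the biholomorphism to $\mathbf{P}^{n_i}$.

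The principal obstacle I anticipate is the rigidity step in the irreducible case: turning the vanishing of the Bochner term into enough parallel data to identify $X$ with the quadric requires a delicate interaction between the null directions of $\mathcal{C}$ and Berger's holonomy classification, and must simultaneously rule out competing symmetric and non-symmetric candidates. A closely related difficulty is verifying, for the reducible case, that among the irreducible compact Hermitian symmetric spaces of rank at least two only the quadric even comes close to $\mathcal{C} \geq 0$, so that no such factor can appear.
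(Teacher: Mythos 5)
Your reducible case is essentially the argument the paper uses (the mixed part $\bigoplus_{i<j}T^{1,0}X_i\odot T^{1,0}X_j$ lies in $\ker\mathcal{C}$, a dimension count against $\frac n2$-nonnegativity forces each factor to have nonnegative Calabi operator, and then Calabi--Vesentini plus Mok finishes), though two details need repair: nonnegativity of $\mathcal{C}$ is only \emph{sufficient}, not equivalent, to nonnegative holomorphic bisectional curvature, and your ``upgrade to $\mathcal{C}>0$ because a null direction forces a parallel holonomy reduction'' is both unjustified (a pointwise kernel of a curvature operator is not a parallel object) and unnecessary, since Mok's theorem already yields biholomorphisms to projective spaces once the rank $\geq 2$ symmetric factors are excluded. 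Also note that the finite-cover splitting $\mathbf{T}^k\times Y_1\times\cdots\times Y_m$ is not available from reducible holonomy alone; you should run the eigenvalue count on the \emph{local} de Rham product, deduce $\mathcal{C}\geq 0$, and only then invoke the Cheeger--Gromoll/Mok structure theory.

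The genuine gap is the irreducible case: what you offer there is a plan whose hardest steps you yourself flag as unresolved, and the case division you start from is not the right one. Failure of strict $\frac n2$-positivity at some point does not produce a nonzero harmonic form or any parallel section, so ``tracing back through the Bochner argument'' yields nothing; the correct dichotomy is that either all real primitive harmonic forms vanish (giving the rational cohomology of $\mathbf{P}^n$) or some nonzero primitive harmonic form exists, and by Theorem~\ref{thm:main-theorem-3} (with $\Upsilon_{p,q}\geq\frac n2$) it is parallel, reducing the holonomy below $\msf{U}(n)$. From there three steps remain, none of which your sketch supplies: (1) Berger's list leaves the Ricci-flat holonomies $\msf{SU}(n)$ and $\msf{Sp}(n/2)$, which are perfectly compatible with extra parallel forms and must be excluded by showing that Ricci-flatness together with $\frac n2$-nonnegative $\mathcal{C}$ forces flatness, contradicting irreducibility; (2) in the locally symmetric case one must use the Calabi--Vesentini eigenvalue computations to show that the complex quadric is the \emph{only} irreducible Hermitian symmetric space whose Calabi operator meets the $\frac n2$-threshold --- this is exactly the content of Theorem~\ref{thm:main-theorem-2.2}, which you acknowledge as ``the principal obstacle'' rather than prove; and (3) the statement asserts a global isometry with the quadric, so one must pass from local to global, e.g.\ via positive Ricci curvature, Fano, hence simply connected. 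As written, the irreducible half of Theorem~\ref{thm:main-theorem-2} is therefore not established.
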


Theorem~\ref{thm:main-theorem-2} generalizes to a local holonomy classification for (not necessarily complete) K\"ahler manifolds in the spirit of \cite{NPWW}, see Theorem~\ref{thm:main-theorem-2.2}.


Theorem~\ref{thm:main-theorem-1} follows after establishing vanishing of all the primitive harmonic forms. Note that by the Lefschetz and Hodge decomposition theorems, this gives complete control of the Hodge and Betti numbers. Since a $(p,q)$-form can be decomposed into real and imaginary parts, it suffices to prove that \textit{real} primitive harmonic forms in $\Omega^{p,q}_X \oplus \Omega^{q,p}_X$ vanish. This turns out to be key in proving the required estimates, see Proposition~\ref{MainEstimateForForms}. To state our refined vanishing theorem, we define
\begin{align*}
\Upsilon_{p,q}  \ : = \ \frac{(p+q)(n+1) - 2pq}{2+4\min \left( p, q, \frac{\sqrt{pq}}{2} \right)},
\end{align*} 
for all $1 \leq p,q\leq n$. Our main theorem for individual Hodge numbers is the following result.

\begin{maintheorem}\label{thm:main-theorem-3}
Let $X^n$ be a compact K\"ahler manifold. If the Calabi curvature operator $\mathcal{C}$ is $\Upsilon_{p,q}$-nonnegative, then the real primitive harmonic forms in $\Omega_X^{p,q} \oplus \Omega_X^{q,p}$ are parallel. If $\mathcal{C}$ is $\Upsilon_{p,q}$-positive, then the real primitive harmonic forms in $\Omega_X^{p,q} \oplus \Omega_X^{q,p}$ vanish. 
\end{maintheorem}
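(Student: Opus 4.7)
The plan is to employ the Bochner technique: derive a Weitzenböck formula for real primitive harmonic forms $\varphi \in \Omega_X^{p,q} \oplus \Omega_X^{q,p}$, rewrite the curvature term using the Calabi curvature operator $\mc C$, and then invoke a linear-algebraic estimate that converts $\Upsilon_{p,q}$-nonnegativity of $\mc C$ into pointwise nonnegativity of that curvature term. Integration then yields $\nabla \varphi = 0$ in the nonnegative case, and under strict positivity the inequality becomes strict, forcing $\varphi \equiv 0$.

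The first two steps follow a now-standard pattern. For a harmonic form $\varphi$ one has
$$\tfrac{1}{2}\Delta |\varphi|^2 \;=\; |\nabla \varphi|^2 + \langle \mathfrak{R}(\varphi),\varphi\rangle,$$
where $\mathfrak{R}$ is the natural action of the Riemann tensor on forms. Choosing a unitary frame and using the defining identity
$$g(\mc C(\xi \odot \mu),\bar\eta \odot \bar\nu) = 4R(\xi,\bar\eta,\bar\nu,\mu),$$
the quadratic form $\langle \mathfrak{R}(\varphi),\varphi\rangle$ can be reorganised as a sum $\sum_\alpha \langle \mc C(\sigma_\alpha),\sigma_\alpha\rangle$ for a family of symmetric bi-vectors $\sigma_\alpha \in \mc S_X^{2,0}$ built from the coefficients $\varphi_{I\bar J}$.

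The crux is the following eigenvalue averaging principle: if a self-adjoint operator $A$ on an $N$-dimensional space is $k$-nonnegative, and $B$ is a positive semi-definite operator with trace $T$ and operator norm at most $M$, then $\tr(AB) \geq 0$ whenever $k \leq T/M$. Applied with $A = \mc C$ and $B = \sum_\alpha \sigma_\alpha \otimes \sigma_\alpha^*$, the trace $T = \sum_\alpha |\sigma_\alpha|^2$ is computed via a Ricci-type identity to equal $(p+q)(n+1) - 2pq$, the classical Weitzenböck constant for primitive $(p,q)$-forms. The operator norm bound $M$ reduces to a Cauchy--Schwarz estimate on the coefficients $\varphi_{I\bar J}$ along any fixed eigendirection $e_i \odot e_j$ of $\mc C$, and this splits into two regimes: a \emph{short-index} regime in which $M \leq 2\min(p,q)$, and a \emph{long-index} regime in which a finer estimate gives $M \leq \sqrt{pq}$. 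Taking the minimum produces the denominator $2 + 4\min(p,q,\sqrt{pq}/2)$, and thus $T/M = \Upsilon_{p,q}$.

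The main obstacle is the $\sqrt{pq}/2$ regime, which goes beyond the classical estimates of Ogiue--Tachibana \cite{OT}. The improvement requires using the primitivity constraint $\Lambda \varphi = 0$ to cancel trace-type components $\sum_i \varphi_{\cdots i \cdots \bar i\cdots}$ that would otherwise concentrate excessive weight on a single eigendirection of $\mc C$, combined with a symmetrisation over multi-index permutations of $I$ and $J$ to spread the remaining weight evenly across eigendirections. Once the pointwise inequality $\langle \mathfrak{R}(\varphi),\varphi\rangle \geq 0$ is established, the Bochner formula and integration by parts complete the proof, with strict positivity of $\mc C$ yielding strict positivity of the curvature term and hence the vanishing statement.
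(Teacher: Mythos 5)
Your overall architecture is the same as the paper's: Bochner--Weitzenb\"ock formula, rewriting $g(\Ric_L(\varphi),\varphi)$ as an eigenvalue-weighted sum $2\sum_\nu \sigma_\nu |\Sigma_\nu\varphi|^2$ over a unitary eigenbasis of $\mathcal{C}$, a trace identity for primitive forms giving the numerator $(p+q)(n+1)-2pq$, an operator-norm bound giving the denominator, and the weight (eigenvalue-averaging) principle to convert $\Upsilon_{p,q}$-nonnegativity into pointwise nonnegativity of the curvature term, followed by integration. All of that matches Lemma~\ref{CurvatureTermWithCalabi}, Lemma~\ref{NormHolomSymHatForms}, Proposition~\ref{MainEstimateForForms} and Proposition~\ref{WeightPrinciple}.

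The genuine gap is the crux you yourself flag: the operator-norm estimate in the $\sqrt{pq}/2$ regime is asserted, not proved, and the mechanism you propose is not the one that works. Cauchy--Schwarz-type arguments on the coefficients only yield the $\min(p,q)$-type bounds (the paper's estimates \eqref{estimate1}--\eqref{estimate2}); the improvement to $\tfrac12+\tfrac{\sqrt{pq}}{2}$ in Proposition~\ref{MainEstimateForForms} is obtained by a variational argument: maximize $|S\psi|^2$ subject to $|S|^2=|\psi|^2=1$, use the critical-point equation $\overline S(S\psi)=\alpha\psi$, read off the resulting relation on the maximal coefficient $\psi_K$, and then optimize the eigenvalue distribution of $S$ over the index sets $I_K\setminus J_K$ and $J_K\setminus I_K$, which is where $\sqrt{|I\setminus J|\cdot|J\setminus I|}\le\sqrt{pq}$ enters. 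Crucially, this bound holds for \emph{all} real forms in $\Ext^{p,q}V^*\oplus\Ext^{q,p}V^*$; primitivity plays no role in it. Primitivity enters only through the norm identity, where $\Lambda\varphi=0$ removes the $|\Lambda\varphi|^2$ correction in $|\varphi^{\bigodot^2V^{1,0}}|^2$. Your plan instead locates the use of $\Lambda\varphi=0$ inside the eigendirection estimate ("cancelling trace-type components'' plus a symmetrisation over permutations of $I$ and $J$), and no argument is given that this produces a $\sqrt{pq}$-type constant; as written it would at best reproduce the $\min(p,q)$ regime, which is insufficient whenever $\max(p,q)<4\min(p,q)$, i.e. for most bidegrees, including $(p,p)$. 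In addition, your stated bounds $M\le 2\min(p,q)$ and $M\le\sqrt{pq}$ drop the additive constant coming from the diagonal ($a=b$) replacement terms, so even formally the ratio $T/M$ does not reproduce $\Upsilon_{p,q}=\frac{(p+q)(n+1)-2pq}{2+4\min\left(p,q,\frac{\sqrt{pq}}{2}\right)}$. Until the sharp estimate $|S\psi|^2\le\left(\tfrac12+\min\left(p,q,\tfrac{\sqrt{pq}}{2}\right)\right)|S|^2|\psi|^2$ is actually established, the theorem with the stated constant is not proved.
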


Note that $\Upsilon_{p,q} \geq \frac{n}{2}$ whenever $1 \leq p + q \leq n$, hence Theorem~\ref{thm:main-theorem-1} follows from the above discussion and Serre duality. We also note that $\Upsilon_{p,q}$ is minimized when $p=q=1$. Further, $\Upsilon_{p,p} = \frac{p((n+1)-p)}{1+p}$ and $\Upsilon_{p,0} = \frac{p(n+1)}{2}.$ In particular, $\Upsilon_{n,0} = \dim \mathcal{S}_X^{2,0}$, which recovers the result of Bochner \cite{Bochner} that $h^{n,0}$ is controlled by the scalar curvature; this contrasts the previous work \cite{PW2}. 
\vspace{2mm}

It is worth noting that topological invariants such as characteristic classes only contribute to Hodge numbers that occur in degree $(p,p)$. Moreover, K\"ahler manifolds with symmetries, such as flag varieties \cite{CG} and, in particular, compact Hermitian symmetric spaces \cite{BH} only have nonzero Hodge numbers in degree $(p,p)$. The same is true for compact K\"ahler manifolds admitting a Killing field with discrete vanishing locus \cite{WrightKillingVectorFieldsAndHarmonicForms}. Other interesting examples of this type, such as twistor spaces, can be found in \cite{CLS,LS,Salamon}.

\begin{remark*}
    \normalfont
    The Ricci tensor is bounded from below by the sum of the lowest $n$ eigenvalues of the Calabi curvature operator. In particular, if the sum of the lowest $\Upsilon_{p,q}$ eigenvalues of the Calabi curvature operator is bounded from below, we automatically obtain a lower bound on the Ricci curvature. This allows us to obtain estimates for Hodge numbers of compact K\"ahler manifolds in terms of their upper diameter bound, as in \cite[Theorem D]{PW2}. 
\end{remark*}

With regard to the setup and the use of the Calabi curvature operator, we would like to note that it was already observed by Calabi--Vesentini \cite{CV} that the curvature tensor induced by an algebraic Calabi curvature operator, i.e., a self-adjoint operator on $\mathcal{S}_p^{2,0},$ automatically satisfies the algebraic Bianchi identity. Therefore, the setup of the Calabi curvature operator allows us to incorporate the algebraic Bianchi identity, which is a significant improvement over the setup in \cite{PW2}. The price to pay is that elements of $\mathcal{S}_X^{2,0}$ acting as endomorphisms do not preserve the type of the form. More precisely, in contrast with the $\mathfrak{u}(n)$-action on $(p,q)$-forms considered in \cite{PW2}, the operators we consider transform $(p,q)$-forms into $(p-1,q+1)$-forms. 

Furthermore, contrasting \cite{NPWBettiNumbersAndCOSK}, the Calabi curvature operator is different from the curvature operator of the second kind, which acts on trace-free symmetric tensors, as the Calabi curvature operator is defined only on (smooth) sections of the holomorphic bundle $\mathcal{S}_X^{2,0}$ and not all symmetric complex tensors. 

A technical outline of the proof is given at the beginning of Section~\ref{SectionProofMainTheorems}. \vspace{2mm}

In addition to the results for the Calabi curvature operator, we prove an analog of Theorem~\ref{thm:main-theorem-1} for the K\"ahler curvature operator $\mathfrak{K} \colon \Omega_X^{1,1} \to \Omega_X^{1,1}$ of a K\"ahler--Einstein metric. As explained at the beginning of the introduction, the results in \cite{PW2} for general K\"ahler manifolds required $\left(3 - \frac{2}{n} \right)$-positive K\"ahler curvature operator to obtain vanishing of all primitive forms, while all primitive $(p,p)$-forms already vanish if $\mathfrak{K}$ is $\left( \frac{n}{2}+1 \right)$-positive. 

In the case of K\"ahler--Einstein manifolds, we note that the K\"ahler form is an eigenvector with the Einstein constant as the corresponding eigenvalue. Therefore, we may restrict the K\"ahler curvature operator to primitive $(1,1)$-forms and obtain the following result.

\begin{maintheorem}
\label{thm:main-theorem-4}
Let $(X^n,g)$ be a compact K\"ahler--Einstein manifold. If the restriction of the K\"ahler curvature operator to primitive $(1,1)$-forms $\mathfrak{K}|_{\mathfrak{su}(n)} \colon \Ext^{1,1}_0 TX \to \Ext^{1,1}_0 TX$ is $\left(\frac{n}{2} +1 \right)$-nonnegative, then all primitive harmonic forms are parallel. If $\mathfrak{K}|_{\mathfrak{su}(n)}$ is $\left(\frac{n}{2} +1 \right)$-positive, then all primitive harmonic $(p,q)$-forms vanish.
\end{maintheorem}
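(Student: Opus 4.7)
The plan is to run the Bochner--Weitzenb\"ock argument for a real primitive harmonic $(p,q)$-form $\varphi$ on $X$, using the K\"ahler--Einstein hypothesis to isolate the action of the K\"ahler form $\omega$ and transfer all curvature estimates onto $\mathfrak{K}|_{\mathfrak{su}(n)}$. Integrating $\Delta\varphi = \nabla^{*}\nabla\varphi + \mathfrak{R}(\varphi)$ against $\varphi$ reduces the conclusion to a pointwise inequality $\langle\mathfrak{R}(\varphi),\varphi\rangle\ge 0$ (or $>0$ for vanishing). Following the framework of \cite{PW2}, the endomorphism $\mathfrak{R}$ on $(p,q)$-forms is encoded by the eigendata of $\mathfrak{K}$: picking an orthonormal eigenbasis $\{\Theta_\alpha\}\subset\mathfrak{u}(n)$ of $\mathfrak{K}$ with eigenvalues $\lambda_\alpha$, one has $\langle\mathfrak{R}(\varphi),\varphi\rangle = \sum_\alpha \lambda_\alpha Q_\alpha(\varphi)$, where $Q_\alpha$ is the quadratic form arising from the natural $\mathfrak{u}(n)$-representation on $\Ext^{p,q}T^{*}X$, which preserves bidegree.

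Next, the Einstein condition $\Ric = \mu g$ is used to separate the contribution from $\omega$: one has $\mathfrak{K}(\omega)=\mu\omega$, so along $\mathfrak{u}(n)=\mathbb{R}\omega\oplus\mathfrak{su}(n)$ one writes $\mathfrak{K}=\mu P_\omega + \mathfrak{K}|_{\mathfrak{su}(n)}$. Because $\omega$ lies in the centre of $\mathfrak{u}(n)$, the quadratic form $Q_\omega$ reduces to an explicit scalar depending only on $(p,q)$. Moreover, the Einstein identity combined with $\mathrm{tr}\,\mathfrak{K}=\mu+\mathrm{tr}(\mathfrak{K}|_{\mathfrak{su}(n)})$ forces a linear relation of the form $(n-1)\mu = \mathrm{tr}(\mathfrak{K}|_{\mathfrak{su}(n)})$ up to normalization, so the $\omega$-contribution reads as a linear functional in the eigenvalues of $\mathfrak{K}|_{\mathfrak{su}(n)}$ alone. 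In particular, $\langle\mathfrak{R}(\varphi),\varphi\rangle$ becomes an expression involving only the spectrum of $\mathfrak{K}|_{\mathfrak{su}(n)}$.

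The remaining $\mathfrak{su}(n)$-term $\sum_{\Theta_\alpha\in\mathfrak{su}(n)}\lambda_\alpha Q_\alpha(\varphi)$ is estimated by adapting the representation-theoretic bounds from \cite{PW2} to the restriction to $\mathfrak{su}(n)$, and is combined with the rewritten $\omega$-term by a rearrangement/convexity argument. The target is an inequality expressing $\langle\mathfrak{R}(\varphi),\varphi\rangle$ as a nonnegative weighted combination of the ordered eigenvalues of $\mathfrak{K}|_{\mathfrak{su}(n)}$ whose weight profile matches the weighted $k$-positivity of the excerpt for $k=n/2+1$. Then $(n/2+1)$-nonnegativity gives $\langle\mathfrak{R}(\varphi),\varphi\rangle\ge 0$, so $\nabla\varphi=0$; strict $(n/2+1)$-positivity gives strict inequality wherever $\varphi\neq 0$, hence $\varphi\equiv 0$.

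The main obstacle is this final rearrangement: verifying that the same threshold $n/2+1$ works uniformly across all bidegrees $(p,q)$ with $1\le p+q\le n$, since the $\omega$-contribution scales with $(p-q)^{2}$ and is sign-sensitive to $\mu$, while the $Q_\alpha$ for $\Theta_\alpha\in\mathfrak{su}(n)$ depend on $(p,q)$ through non-trivial combinatorial constants. The delicate point is to choose the rewriting of $\mu$ via $\mathrm{tr}(\mathfrak{K}|_{\mathfrak{su}(n)})$ so that any adverse sign of $\mu$ (when the Einstein constant is non-positive) is absorbed, leaving an inequality controlled precisely by the lowest $\lfloor n/2+1\rfloor$ eigenvalues of $\mathfrak{K}|_{\mathfrak{su}(n)}$.
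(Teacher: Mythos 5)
Your plan follows the same route as the paper's proof: split $\mathfrak{u}(n)=\mathbf{R}\,\omega_K\oplus\mathfrak{su}(n)$, use the Einstein condition to make $\omega_K$ a $\mathfrak{K}$-eigenvector with eigenvalue $\lambda$ and to trade $\lambda$ for $\operatorname{tr}(\mathfrak{K}|_{\mathfrak{su}(n)})=(n-1)\lambda$, then estimate the Weitzenb\"ock curvature term through the spectrum of $\mathfrak{K}|_{\mathfrak{su}(n)}$ alone. The problem is that your argument stops exactly where the real work begins: the ``rearrangement/convexity argument'' producing the threshold $\frac n2+1$ is the entire content of the proof, and you leave it as an acknowledged obstacle rather than carrying it out. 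Note also that the sign issue you worry about is not actually there: after substituting $\lambda=\frac{1}{n-1}\operatorname{tr}\bigl(\mathfrak{K}|_{\mathfrak{su}(n)}\bigr)$, the curvature term becomes, for a primitive $(p,q)$-form $\varphi$ and an orthonormal eigenbasis $\Xi_\alpha$ of $\mathfrak{K}|_{\mathfrak{su}(n)}$ with eigenvalues $\lambda_\alpha$,
\begin{align*}
\tfrac12\, g\bigl(\operatorname{Ric}_L(\varphi),\overline{\varphi}\bigr)
= \lambda\,\frac{(p-q)^2}{n}\,|\varphi|^2+\sum_\alpha\lambda_\alpha|\Xi_\alpha\varphi|^2
= \sum_\alpha\lambda_\alpha\Bigl(\frac{(p-q)^2}{n(n-1)}|\varphi|^2+|\Xi_\alpha\varphi|^2\Bigr),
\end{align*}
and the weights attached to the $\lambda_\alpha$ are nonnegative irrespective of the sign of the Einstein constant, so nothing needs to be ``absorbed''.

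What is genuinely missing are the quantitative inputs that turn this into an eigenvalue condition: (i) the total weight, i.e.\ the identity $|\varphi^{\mathfrak{su}}|^2=\bigl(2pq+(p+q)(n+1-(p+q))-\frac{(p-q)^2}{n}\bigr)|\varphi|^2$ for primitive $\varphi$ (Lemma~\ref{suHatPrimitiveForm}, via \cite[Propositions 1.3 and 3.2]{PW2}), which makes the weights above sum to $\bigl((p+q)(n+1)-2pq\bigr)|\varphi|^2$; (ii) the per-direction bound $|\Xi_\alpha\varphi|^2\le(p+q)|\varphi|^2$ of \cite[Proposition 3.4]{PW2} (Remark~\ref{UnEstimateForms}); and (iii) the weight principle \cite[Theorem 3.6]{NPWBettiNumbersAndCOSK}, which then shows that $\Gamma_{p,q}$-nonnegativity of $\mathfrak{K}|_{\mathfrak{su}(n)}$ suffices, where $\Gamma_{p,q}$ is the ratio of total to maximal weight computed in Proposition~\ref{PropositionVanishingFormsKE}. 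Finally one must verify $\Gamma_{p,q}\ge\frac n2+1$ for the relevant bidegrees (after reducing to $p+q\le n$ by Serre duality); this is not formal: $\Gamma_{p,p}=n+1-p$ gives equality at $p=q=\frac n2$, while $\Gamma_{n,0}=\frac{n^2-1}{n}$ exceeds $\frac n2+1$ only for $n\ge3$, so the uniformity of the threshold across bidegrees --- precisely the point you single out as delicate --- requires this explicit computation rather than a soft convexity argument.
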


In Section~\ref{SectionKaehlerEinstein}, we in fact establish curvature conditions depending on $(p,q)$ so that the corresponding Hodge number vanishes, similar to Theorem~\ref{thm:main-theorem-3}. The proof relies on extending the techniques in \cite{PW2} for K\"ahler--Einstein metrics.

\subsection*{Acknowledgments}
K.~Broder and P.~Petersen thank Ramiro Lafuente and the University of Queensland (UQ) for funding P.~Petersen's research visit to UQ and for their hospitality.
K.~Broder, P.~Petersen, and J.~Stanfield would also like to thank MATRIX for their hospitality during the workshop on Geometric Analysis and Symmetry in February 2025. 

K.~Broder was supported by the Australian Research Council Discovery Projects DP220102530 and DP240101772.  J. Stanfield was supported by the Deutsche Forschungsgemeinschaft (DFG, German Research Foundation) under Germany's Excellence Strategy EXC 2044 –390685587, Mathematics Münster: Dynamics–Geometry–Structure and by the CRC 1442 “Geometry: Deformations and Rigidity” of the DFG.

\section{Preliminaries and conventions}

\subsection{Conventions on tensors and norms} 
Let $(V,g)$ be a Euclidean $\mathbf{R}$-vector space. We define the wedge product as in \cite{PetersenManifoldTheory}. In particular, for $v_1, \ldots, v_k \in V$ we have 
\begin{align*}
    v_1 \wedge \ldots \wedge v_k \ :=  \ \sum_{\sigma \in S_k} \sign{\sigma} \ v_{\sigma(1)} \otimes \ldots \otimes v_{\sigma(k)}
\end{align*}
and specifically for $v, w \in V$, $v \wedge w \ : = \ v \otimes w - w \otimes v$. Similarly, for $v,w \in V$, we set $v \odot w := v \otimes w + w \otimes v$. Using the metric $g$,  the map $v \otimes w \mapsto g(v, \cdot )w$ identifies  $V \otimes V \simeq V^{*} \otimes V$.  The canonical isomorphism $V^{*} \otimes V \simeq \End(V)$ then yields 
\begin{eqnarray*}
    (v \otimes w) z & = & g(v,z) w, \\
    (v \wedge w) z & = & g(v,z)w - g(w,z)v, \\
    (v \odot w)z & = & g(v,z)w + g(w,z)v.
\end{eqnarray*} This induces an identification of $\Ext^2V$ with $\mathfrak{so}(V)$ and of $\bigodot^2V$ with the vector space of self-adjoint endomorphisms of $V$.

For a tensor $T \in \bigotimes^k V \otimes \bigotimes^l V^{*}$ we use the tensor norm, 
\begin{align*}
    |T|^2 
    = \sum_{\substack{ {i_1},  \ldots, {i_k} \\ {j_1}, \ldots, {j_l}}} \left( T^{{i_1}  \ldots {i_k}} {}_{{j_1} \ldots {j_l}} \right)^2.
\end{align*}
Consequently, if $e_1, \ldots, e_k$ are orthonormal, then 
\begin{align*}
 |e_1 \wedge e_2 |^2 = | e_1 \odot e_2 |^2  = 2 \text{ and } | e_1  \wedge \ldots \wedge e_k |^2   =  k!.
\end{align*}

Declaring the identification  $V \otimes V \simeq V^{*} \otimes V \simeq \End(V)$ to be an isometry, the induced inner product on $\End(V)$ is 
\begin{align*}
    g(L_1, L_2) \ = \ \tr(L_1 L_2^*).
\end{align*}

\begin{lemma}
    (a) If $L \in \End(V) \simeq V \otimes V$ and $u,v \in V,$ then $g(L, u \otimes v) = g(Lu, v).$

    (b) If $L \in \mathfrak{so}(V)$ and $u,v \in V,$ then $g(L, u \wedge v) = 2 g(Lu,v).$
\end{lemma}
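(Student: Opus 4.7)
Both statements are routine unpackings of the isometric identification $V \otimes V \simeq \End(V)$ together with the tensor-norm inner product, so the plan is purely computational and proceeds in two short steps.

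For part (a), the plan is to fix an orthonormal basis $e_1, \ldots, e_n$ of $V$ and expand $L = \sum_{i,j} L^{ij}\, e_i \otimes e_j$ and $u = \sum_i u_i e_i$, $v = \sum_j v_j e_j$. Since $(e_i \otimes e_j)(z) = g(e_i, z) e_j$, one checks that the matrix coefficients of $L$ are exactly $L^{ij} = g(L e_i, e_j)$, so that $Lu = \sum_{i,j} u_i L^{ij} e_j$. The tensor-norm inner product then gives
\begin{align*}
g(L, u \otimes v) \ = \ \sum_{i,j} L^{ij}\, u_i v_j \ = \ g(Lu, v),
\end{align*}
which is the desired identity. (Alternatively, one could verify this basis-free by using $g(L_1, L_2) = \tr(L_1 L_2^*)$ and observing $(u \otimes v)^* = v \otimes u$, but the coordinate computation is cleanest.)

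For part (b), the plan is to reduce to (a) via the definition $u \wedge v = u \otimes v - v \otimes u$. Bilinearity of $g$ and two applications of (a) give
\begin{align*}
g(L, u \wedge v) \ = \ g(L, u \otimes v) - g(L, v \otimes u) \ = \ g(Lu, v) - g(Lv, u).
\end{align*}
Then the assumption $L \in \mathfrak{so}(V)$, i.e., $g(Lv, u) = -g(v, Lu) = -g(Lu, v)$, turns the right-hand side into $2 g(Lu, v)$, completing the proof.

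There is no real obstacle here; the only thing to be careful about is keeping the index conventions straight so that the matrix coefficients of $L$ as an endomorphism match the components $L^{ij}$ used in the tensor-norm formula $|T|^2 = \sum (T^{ij})^2$. Once that bookkeeping is set up, both identities follow immediately.
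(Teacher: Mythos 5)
Your proposal is correct and essentially matches the paper's argument: the paper verifies (a) by checking $g(z \otimes w, u \otimes v) = g(z,u)g(w,v) = g((z \otimes w)u, v)$ on simple tensors rather than in coordinates, but this is the same routine computation, and your proof of (b) via $u \wedge v = u \otimes v - v \otimes u$ and skew-adjointness is exactly the paper's.
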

\begin{proof} 
Note that $g(z \otimes w, u \otimes v) = g(z,u)g(w,v) = g( (z \otimes w)u, v).$ This implies (a). For (b), if $L \in \mathfrak{so}(V),$ then $g(L, u \wedge v) = g(L, u \otimes v - v \otimes u) = g(Lu,v) - g(Lv, u) = 2 g(Lu,v).$
\end{proof}

\subsection{Complexification} Let $J$ be a compatible (almost) complex structure on $(V,g),$ i.e., $J^2 = - \id$ and $g(J \cdot, J \cdot ) = g( \cdot, \cdot ).$ Let $\dim_{\mathbf{R}} V = 2n$ and let $e_1, \ldots, e_{2n}$ be a $J$-adapted orthonormal basis for $V.$ By convention, $Je_a = e_{a+n}$ for $a=1, \ldots, n.$ Consider $V^{\mathbf{C}} = V \otimes_{\mathbf{R}} \mathbf{C}$ and extend all $\mathbf{R}$-linear maps to be $\mathbf{C}$-linear. Note that 
\begin{align*}
    \tr_{\mathbf{R}}(L) = \tr_{\mathbf{C}}(L^{\mathbf{C}})=2 \tr_{\mathbf{R}}(L^{\mathbf{C}}),
\end{align*}
where $L^{\mathbf{C}}$ denotes the $\mathbf{C}$-linear extension. In the following, we will not differentiate between $L$ and $L^{\mathbf{C}}.$ With this convention, the metric $g$ is extended $\mathbf{C}$-bilinearly and we obtain the identification
\begin{align*}
    \End( V^{\mathbf{C}} ) \simeq  \left( V^{\mathbf{C}} \right)^{*} \otimes V^{\mathbf{C}} \simeq V^{\mathbf{C}} \otimes V^{\mathbf{C}}.
\end{align*}

The space $V^{\mathbf{C}}$ decomposes into the $\pm \sqrt{-1}$-eigenspaces of $J$, i.e., $V^{\mathbf{C}} = V^{1,0} \oplus V^{0,1}$. The corresponding $(1,0)$- and $(0,1)$-vectors are denoted by
\begin{align*}
    Z_a \  := \ \frac{1}{\sqrt{2}} \left( e_a - \sqrt{-1} J e_a \right), \hspace{0.5cm} \overline{Z_a} \ : = \  \frac{1}{\sqrt{2}} \left( e_a + \sqrt{-1} J e_a \right) 
\end{align*}
for $a = 1, \ldots, n$ and we obtain a basis $Z_1, \ldots, Z_n$ for $V^{1,0}$ and a basis $\overline{Z_1}, \ldots, \overline{Z_n}$ for $V^{0,1}=\overline{V^{1,0}}.$ Note that \begin{align*}
    g(Z_a, Z_b) \ = \ g(\overline{Z_a}, \overline{Z_b}) \ = \ 0,  \hspace{0.5cm} g(Z_a, \overline{Z_b}) \ = \ \delta_{ab}.
\end{align*}
Furthermore, we obtain the decomposition $\Ext^2 V^{\mathbf{C}} = \Ext^{2,0} V \oplus \Ext^{1,1} V \oplus \Ext^{0,2} V$ from the basis
\begin{align*}
    \frac{1}{\sqrt{2}} Z_a \wedge Z_b & \text{ for } 1 \leq a < b \leq n, \\
    \frac{1}{\sqrt{2}} Z_a \wedge \overline{Z_b} & \text{ for } 1 \leq a , b \leq n, \\
    \frac{1}{\sqrt{2}} \overline{Z_a} \wedge \overline{Z_b} & \text{ for } 1 \leq a < b \leq n.
\end{align*}

\begin{remark} \normalfont
We also raise indices using the metric according to the formulae $Z^a = g(\cdot, \overline{Z_a})$ and $ \overline{Z^a} = g(Z_a, \cdot )$. In particular, note that for $L \in \End(V^{\C}), \bar{L}(v)= \overline{L(\bar{v})}$ is complex linear.
\end{remark}

\begin{remark}
    \label{TraceInZCoordinates}
    \normalfont
    Note that if $A \in \bigotimes^2 V^{*},$ then
    \begin{align*}
        \sum_{j=1}^{2n} A(e_j, e_j)  = \sum_{a=1}^n \left( A(Z_a, \overline{Z_a}) + A(\overline{Z_a}, Z_a) \right).
    \end{align*}
\end{remark}

\subsection{Curvature tensors}
\label{SectionCurvatureTensors}
An algebraic curvature tensor is an element $R \in \bigotimes^4 V^{*}$ such that 
\begin{align*}
    R(X,Y,Z,W)=-R(Y,X,Z,W)=-R(X,Y,W,Z)=R(Z,W,X,Y)
\end{align*}
which satisfies the algebraic Bianchi identity. 

Furthermore, we define $R(X,Y)Z$ via $g(R(X,Y)Z,W)=R(X,Y,Z,W)$ and the {\em curvature operator} via
\begin{align*}
    & \mathfrak{R} \colon \Ext^2V \to \Ext^2 V, \\
    g( \mathfrak{R}(X \wedge Y) &,  Z \wedge W) \ : = \ 2 R(X,Y,Z,W).
\end{align*}
With this convention, the curvature operator of the round sphere is the identity. \vspace{2mm}

Let $(V,J,g)$ be a Euclidean vector space with a compatible complex structure. An algebraic curvature tensor is an {\em algebraic K\"ahler curvature tensor} if 
\begin{align*}
    R(X,Y,Z,W) \ = \ R(JX,JY,Z,W) \ = \ R(X,Y,JZ,JW). 
\end{align*}
It follows that $R(Z_a,Z_b) = R(\overline{Z_a},\overline{Z_b}) = 0$ and $R(Z_a,\overline{Z_b}) \in \End(V^{\mathbf{C}})$ induces endomorphisms of $V^{1,0}$ and $V^{0,1},$ respectively. Since the endomorphism $R( \cdot ,\cdot ) \in \End(V^{\mathbf{C}})$ is defined via $g(R(X,Y)Z,W)=R(X,Y,Z,W)$, we see that $R(X,Y)$ is skew-adjoint with respect to $g$. In particular, with the identification $\mathfrak{so}(V^{\mathbf{C}})=\Ext^2 V^{\mathbf{C}},$ we have
\begin{align*}
    R(Z_a, \overline{Z_b}) = \frac{1}{2} \sum_{c,d} g \left(R(Z_a,\overline{Z_b}), \overline{Z_c \wedge \overline{Z_d}} \right) Z_c \wedge \overline{Z_d}
\end{align*}
for a K\"ahler curvature tensor $R.$ Furthermore, we note that a K\"ahler curvature tensor has the symmetries
\begin{align*}
    R(X, \overline{Y}, Z, \overline{W}) =R(Z, \overline{Y}, X, \overline{W}) = R(X, \overline{W}, Z, \overline{Y}).
\end{align*}
for $X,Y,Z,W \in V^{1,0}.$

Recall that the curvature operator of a Riemannian manifold vanishes on the orthogonal complement of the holonomy algebra. For K\"ahler manifolds, as $\mathfrak{u}_{\mathbf{C}} (n) \simeq \Ext^{1,1}V$, this induces the K\"ahler curvature operator. 

\begin{definition}
    For an algebraic K\"ahler curvature operator $R$ we define the induced {\em K\"ahler curvature operator} $\mathfrak{K} \colon \Ext^{1,1} V \to \Ext^{1,1} V$ by
\begin{align*}
    g( \mathfrak{K}(X \wedge \overline{Y}) &,  Z \wedge \overline{W}) \ : =  \ 2 R(X,\overline{Y},Z,\overline{W})
\end{align*}
for all $X,Y,Z,W \in V^{1,0}.$
\end{definition}

In addition, an algebraic K\"ahler curvature tensor $R$ also induces a self-adjoint operator on  $\bigodot^2 V^{1,0}.$ It was introduced and studied by Calabi--Vesentini in \cite{CV}. In honor of Calabi, we call it the {\em Calabi curvature operator}.

\begin{definition}
    For an algebraic K\"ahler curvature operator $R$ we define the induced {\em Calabi curvature operator} $\mathcal{C} \colon \bigodot^2 V^{1,0} \to \bigodot^2 V^{1,0}$ by 
    \begin{eqnarray*}
        g( \mathcal{C}( X \odot Y)  , \overline{Z} \odot \overline{W} ) &:=& 4 R(X, \overline{Z},  \overline{W}, Y)
    \end{eqnarray*}
for $X,Y,Z,W \in V^{1,0}.$
\end{definition}

\begin{remark}
Both the K\"ahler curvature operator $\mathfrak{K}$ and the Calabi curvature operator $\mathcal{C}$ are self-adjoint with respect to $g.$

If $(X,g)$ is a K\"ahler manifold with constant holomorphic sectional curvature, then the Calabi curvature operator of $g$ is a scalar multiple of the identity. Specifically, the Calabi curvature operator of $\mathbf{P}^n$ with the Fubini--Study metric and holomorphic sectional curvature one
is the identity operator. 
\end{remark}

\begin{remark} \label{rmk:NonNegativeCalabi}
Nonnegativity (respectively, positivity) of the Calabi curvature operator implies that the holomorphic bisectional curvature is nonnegative (respectively, positive). Hence, compact K\"ahler manifolds with a positive Calabi curvature operator are classified by the Mori \cite{Mori} and Siu--Yau \cite{SiuYau} resolution of the Frankel--Hartshorne conjecture. Compact Hermitian symmetric spaces of rank $\geq 2$ do not have nonnegative Calabi curvature operator \cite{CV,BCV}. Hence, Mok's theorem \cite{Mok} implies that the universal cover of a compact Kähler manifold with nonnegative Calabi curvature operator is the product of $\mathbf{C}^k$ and complex projective spaces. 
\end{remark}

\begin{remark}
It was observed by Calabi--Vesentini \cite{CV} (see also \cite{Sitaramayya}) that the space of algebraic K\"ahler curvature operators on $V$ is in one-to-one correspondence with the space of self-adjoint operators of $\bigodot^2 V^{1,0}$. On the other hand, a given self-adjoint map of $\Ext^{1,1}V$ does not always correspond to a K\"ahler curvature operator, as the algebraic Bianchi identity may fail.
\end{remark}

\begin{remark}
\normalfont
If $\Sigma_{\nu} \in \bigodot^2 V^{1,0}$ is an eigenbasis for the Calabi curvature operator $\mathcal{C}$ with corresponding eigenvalues $\sigma_{\nu}$ such that $g(\Sigma_{\nu}, \overline{\Sigma_{\mu}})=\delta_{\nu \mu},$ then
\begin{align*}
    R(Z_a, \overline{Z_b}) = - \sum_{\nu} \sigma_{\nu} \overline{\Sigma_{\nu}} Z_a \wedge \Sigma_{\nu} \overline{Z_b}.
\end{align*}

\end{remark}

\subsection{The curvature term of the Lichnerowicz Laplacian}
Let $\varphi \in \Omega_{\mathbf{C}}^k X$ be a smooth $k$-form on a compact K\"ahler manifold $X$.  Here,  we take $\varphi$ to be complex-valued,  but the Bochner technique also finds important applications to bundle-valued $k$-forms (see, e.g.,  \cite{BroderStanfield, GreeneWu,KobayashiWu,YangHSC}).  If $\Delta_{g}$ denotes the Hodge Laplacian of a K\"ahler metric $g,$ then the Bochner--Weitzenb\"ock formula asserts that \begin{eqnarray*}
\Delta_{g} \varphi &=& (\text{d} \text{d}^{\ast} + \text{d}^{\ast} \text{d}) \varphi \ = \ \nabla^{\ast} \nabla \varphi + \text{Ric}_L(\varphi),
\end{eqnarray*} where $\text{Ric}_L(\varphi)$ is tensorial and a contraction of $\nabla^2 \varphi$,  and therefore,  must be a contraction of $R \otimes \varphi$. Explicitly, it is given by the formula \begin{eqnarray*}
\text{Ric}_L(\varphi)(\xi_1, ..., \xi_k) & : = &   \sum_{s=1}^k \sum_{j=1}^{2n} \left( R(\xi_s, e_j) \varphi  \right) (\xi_1, ..., \xi_{s-1} ,e_j, \xi_{s+1}, ..., \xi_k)
\end{eqnarray*} 
where $e_1, \ldots, e_{2n}$ is an orthonormal basis for $T_pX.$ Here, $R( \cdot, \cdot) \in \End(TX_{\mathbf{C}})$ is considered as a skew-adjoint endomorphism acting as a derivation. Specifically, for a linear map $L$ and a tensor $T$ we have
\begin{align}
\label{DefinitionActionOnTensors}
    (LT)(\xi_1, \ldots, \xi_k) = - \sum_{i=1}^k T(\xi_1, \ldots, L\xi_i, \ldots, \xi_k).
\end{align}

We emphasize that the subscript $L$ indicates that $\text{Ric}_L$ is the curvature term that appears in the \textit{Lichnerowicz Laplacian} $\Delta_L : = \nabla^{\ast} \nabla + c \text{Ric}_L$,  for some $c>0$ (see, e.g., \cite{PetersenRiemannianGeometry}).  If,  in addition,  $\varphi$ is harmonic,  then 
\begin{eqnarray}
\label{prelim:Bochner-formula}
0 &=& g \left( \nabla^{\ast} \nabla \varphi, \overline{\varphi} \right) + g\left( \text{Ric}_L(\varphi), \overline{\varphi} \right).
\end{eqnarray} 
If $g \left( \text{Ric}_L(\varphi), \overline{\varphi} \right) \geq 0$, the divergence theorem or maximum principle can be used to show that $\varphi$ is parallel, and thus to estimate Betti numbers.

The precise form of the curvature term appearing in \eqref{prelim:Bochner-formula} depends on which irreducible tensor module $\varphi$ belongs to. If $\varphi \in \Omega_X^{1,0}$, then $g \left( \operatorname{Ric}_L(\varphi), \overline{\varphi} \right) = \Ric_g( \varphi, \overline{\varphi}),$
while if $\varphi \in \Omega_X^{n,0}$,  $g \left( \text{Ric}_L(\varphi), \overline{\varphi} \right) = \scal_{g} | \varphi |^2$. In general, this curvature term does not afford such a kind expression. For $(1,1)$-forms, it corresponds to quadratic orthogonal bisectional curvature \cite{BishopGoldberg,BochnerYano, WuYauZheng}.  

If the curvature term is positive on an irreducible module, then this forces $\varphi \equiv 0$, and thus one obtains vanishing theorems. Note that the curvature term may have a natural kernel; for example, in the K\"ahler case, the powers of the K\"ahler form.

\section{Bochner formulae and a calculus for curvature tensors}

In this section, we present a new technique to derive the curvature term for the Lichnerowicz Laplacian on forms using the Calabi curvature operator. The formula established in Lemma~\ref{CurvatureTermWithCalabi} first appeared (without proof) in the work of Ogiue--Tachibana \cite{OT}. Our new approach is based more generally on curvature operators on tensor bundles and also recovers the corresponding identity for the curvature operator of the second kind in \cite{NPWBettiNumbersAndCOSK}.

\begin{definition} Let $R$ be an algebraic curvature tensor on $(V,g).$  For $i=1,2$ the curvature operators $\mathcal{R}^i \colon \bigotimes^2 V \to \bigotimes^2 V$ are defined on generators by
\begin{align*}
    g(\mathcal{R}^1(X \otimes Y), Z \otimes W) & = R(X,Y,Z,W), \\
    g(\mathcal{R}^2(X \otimes Y), Z \otimes W) & = R(X,Z,W,Y)
\end{align*}
and extended bilinearly. Note that $\mathcal{R}^i$ are self-adjoint operators with respect to $g.$
\end{definition}

\begin{remark}
\label{RelationshipToCurvatureOperators}
    \normalfont
From the definition, we easily compute that
\begin{align*}
    g( \mathcal{R}^1( X \wedge Y), Z \wedge W) & = 4 R(X,Y,Z,W), \\
    g( \mathcal{R}^1( X \wedge Y), Z \odot W) & = 0, \\
    g( \mathcal{R}^1( X \odot Y), Z \odot W) & = 0, \\
    g( \mathcal{R}^2( X \wedge Y), Z \wedge W) & = - \frac{1}{2} g( \mathcal{R}^1( X \wedge Y), Z \wedge W), \\
    g( \mathcal{R}^2( X \wedge Y), Z \odot W) & = 0, \\
    g( \mathcal{R}^2( X \odot Y), Z \odot W) & = 2 \left( R(X,Z,W,Y) + R(X,W,Z,Y) \right).
\end{align*}
In particular, we have the induced operators
\begin{align*}
    \mathcal{R}^1 {}_{| \Ext^2 V} & \colon \Ext^2 V \to \Ext^2 V, \\
    \mathcal{R}^2 {}_{| \bigodot^2 V} & \colon  \bigodot^2 V \to \bigodot^2 V.
\end{align*}
Note that $\mathcal{R}^1{}_{| \Ext^2 V} = 2 \ \mathfrak{R}$ is (two times) the curvature operator and $\mathcal{R}^2{}_{| \bigodot^2 V} =  \overline{R}$ as defined in \cite{NPWBettiNumbersAndCOSK} so that $ \operatorname{pr}_{S^2_0(V)} \circ \mathcal{R}^2 {}_{|S_0^2(V)} = \mathcal{R}$ is the curvature operator of the second kind. 

Moreover, for a complex vector space $(V,J,g)$ we have that 
\begin{align*}
\mathcal{R}^2{}_{| \bigodot^2 V^{1,0}} \colon \bigodot^2 V^{1,0} \to \bigodot^2 V^{1,0}
\end{align*}
and this operator agrees with the Calabi curvature operator since
\begin{align*}
    g( \mathcal{R}^2( X \odot Y), \overline{Z} \odot \overline{W}) = & \ 2 \left( R(X, \overline{Z}, \overline{W},Y) + R(X, \overline{W}, \overline{Z},Y) \right) \\
    = & \ 4 R(X, \overline{Z}, \overline{W}, Y) \\
    = & \ g( \mathcal{C}( X \odot Y), \overline{Z} \odot \overline{W})
\end{align*}
for $X, Y, Z, W \in V^{1,0}.$
\end{remark}

\begin{definition}
    Let $T \in \bigotimes^k V^{*}$ be a tensor. For a subspace $\mathfrak{g} \subset \mathfrak{gl}(V \otimes_{\mathbf{R}} \mathbf{C})$ we define $T^{\mathfrak{g}} \in \bigotimes^k V^{*} \otimes \mathfrak{g}^*$ by 
\begin{align*}
    T^{\mathfrak{g}}(X_1, \ldots, X_k)(L) = (LT)(X_1, \ldots, X_k)
\end{align*}
for all $L\in \mathfrak{g}$.

\end{definition}
   Explicitly, if $\Xi_{\alpha}$ is a basis for $\mathfrak{g},$ then 
    \begin{align*}
       T^{\mathfrak{g}} = \sum_{\alpha} \Xi_{\alpha}T \otimes \Xi_{\alpha}^*
    \end{align*}
    where $\Xi_{\alpha}$ acts on $T$ as a derivation and $\Xi_{\alpha}^{*}$ denotes the dual map. Note that if $E\in \End(\mathfrak{g})$ is an endomorphism, then $E(T^{\mathfrak{g}}) = \sum_{\alpha} \Xi_{\alpha}T \otimes E^*(\Xi_{\alpha}^*).$ If $\{\Xi_{\alpha}\}$ forms a unitary basis, i.e., if $g( \Xi_{\alpha}, \overline{ \Xi_{\beta}}) = \delta_{\alpha \beta},$ then, under the induced identification of $ \mathfrak{g}^*$ with $ \overline{\mathfrak{g}},$ we have $T^{\mathfrak{g}} = \sum_{\alpha} \Xi_{\alpha}T \otimes \overline{\Xi_{\alpha}}.$ Moreover, if $R$ is self-adjoint with respect to $g,$ then $R(T^{\mathfrak{g}}) = \sum_{\alpha} \Xi_{\alpha}T \otimes R(\overline{\Xi_{\alpha}}).$ 
    
    In the following we will consider in particular $\varphi^{\mathfrak{gl}},$ $\varphi^{\mathfrak{gl}_{\mathbf{C}}}$, where $\mathfrak{gl}_{\mathbf{C}} = \mathfrak{gl} \otimes_{\mathbf{R}} \mathbf{C},$ and for a complex vector space $(V,J,g)$ also $\varphi^{\bigodot^{2} V^{1,0}}.$ 

\begin{proposition}
\label{CurvatureR2gl}
    With respect to the tensor norm on $\bigotimes^pV$ we have for $\varphi \in \Ext^p V$
    \begin{align*}
        g( \mathcal{R}^2( \varphi^{\mathfrak{gl}}), \varphi^{\mathfrak{gl}}) = - \frac{p(p-1)}{2} \sum_{i,j,k,l} \sum_{i_3, \ldots, i_p} R_{ijkl} \ \varphi_{ij i_3 \ldots i_p} \varphi_{k l i_3 \ldots i_p}.
    \end{align*}
\end{proposition}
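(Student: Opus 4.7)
The plan is to compute $g(\mathcal{R}^2(\varphi^{\mathfrak{gl}}), \varphi^{\mathfrak{gl}})$ using the basis $E_{ij} := e_i \otimes e_j$ of $\mathfrak{gl}(V) \simeq V \otimes V$, which is orthonormal under the tensor norm. In this basis $\varphi^{\mathfrak{gl}} = \sum_{i,j} (E_{ij}\varphi) \otimes E_{ij}$, and by the definition of $\mathcal{R}^2$ one has $g(\mathcal{R}^2(E_{ij}), E_{kl}) = R(e_i, e_k, e_l, e_j) = R_{iklj}$. Expanding the inner product on the tensor product space therefore gives
\[
g(\mathcal{R}^2(\varphi^{\mathfrak{gl}}), \varphi^{\mathfrak{gl}}) \ = \ \sum_{i,j,k,l} R_{iklj}\, g(E_{ij}\varphi, E_{kl}\varphi),
\]
reducing everything to an inner product in $\bigotimes^p V^{*}$ together with an algebraic manipulation of $R$.

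The next step is to compute $g(E_{ij}\varphi, E_{kl}\varphi)$ using the derivation action \eqref{DefinitionActionOnTensors}. Since $E_{ij}e_m = \delta_{im}e_j$, one obtains $(E_{ij}\varphi)_{l_1 \ldots l_p} = -\sum_s \delta_{il_s}\varphi_{l_1, \ldots, j_{(s)}, \ldots, l_p}$ (where the subscript $(s)$ indicates that $j$ replaces $l_s$ at position $s$), and the inner product expands as a double sum over positions $s, t$. The diagonal case $s = t$ produces a factor $\delta_{ik}$; after using the antisymmetry of $\varphi$ to bring the distinguished index to the front in both factors, the signs $(-1)^{s-1}$ cancel and the contribution is $p\delta_{ik}\sum_{m} \varphi_{jm}\varphi_{lm}$. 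When this is contracted with $R_{iklj}$ it vanishes because $\sum_i R_{iilj} = 0$ by antisymmetry of $R$ in its first two slots. The off-diagonal case $s \neq t$ requires moving \emph{two} distinguished indices to the front, and one must verify that the $s < t$ and $s > t$ subcases combine into $p(p-1)$ identical terms; the result is $p(p-1)\sum_m \varphi_{jkm}\varphi_{ilm}$.

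What remains is to show $X := \sum R_{iklj}\varphi_{jkm}\varphi_{ilm}$ equals $-\tfrac12 U$, with $U := \sum R_{ijkl}\varphi_{ijm}\varphi_{klm}$. First, the pair symmetry $R_{iklj} = R_{ljik}$ together with a dummy relabeling and the antisymmetry of $\varphi$ gives $X = -V$, where $V := \sum R_{ijkl}\varphi_{ikm}\varphi_{jlm}$. Since $\varphi_{ikm}\varphi_{jlm}$ is antisymmetric in $(i,k)$ and in $(j,l)$, the sum $V$ picks up only the antisymmetrization of $R_{ijkl}$ in the pairs of slots $(1,3)$ and $(2,4)$. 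Pair symmetry reduces this antisymmetrization to $\tfrac12(R_{ijkl} + R_{iljk})$, and the first Bianchi identity $R_{ijkl} + R_{iklj} + R_{iljk} = 0$ collapses it to $-\tfrac12 R_{iklj}$. A final relabeling identifies the resulting sum with $-\tfrac12 U$, giving $V = \tfrac12 U$ and hence $X = -\tfrac12 U$. Combining with the previous step yields the claimed identity.

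The main obstacle is the sign bookkeeping in the second step: distinct permutation signs appear in the $s < t$ and $s > t$ cases, and it is crucial to check that they combine with the signs coming from the antisymmetry of $\varphi$ to give matching contributions. The decisive algebraic input is the first Bianchi identity in the third step, which is precisely what distinguishes genuine algebraic curvature tensors from arbitrary self-adjoint operators on $\bigotimes^2 V$.
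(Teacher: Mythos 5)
Your proof is correct, and its skeleton is the same as the paper's: expand $\varphi^{\mathfrak{gl}} = \sum_{i,j}(e_i\otimes e_j)\varphi\otimes(e_i\otimes e_j)$, compute $g((e_i\otimes e_j)\varphi,(e_k\otimes e_l)\varphi)$ by splitting into the diagonal ($s=t$) and off-diagonal ($s\neq t$) position sums, observe that the diagonal term dies against $R_{iklj}$ because $\sum_i R_{iilj}=0$, and then reduce $p(p-1)\sum R_{iklj}\varphi_{jkm}\varphi_{ilm}$ to the claimed expression via the first Bianchi identity. The only genuine difference is how the factor $\tfrac12$ is extracted at the end: the paper substitutes $R_{iklj}=-R_{klij}-R_{likj}$ and notes that the original sum reappears with a minus sign, so the identity is obtained by solving $A=-A-B$; you instead pass to $V=\sum R_{ijkl}\varphi_{ikm}\varphi_{jlm}$, use that only the antisymmetrization of $R$ in the slot pairs $(1,3)$ and $(2,4)$ survives, and identify that antisymmetrization with $-\tfrac12 R_{iklj}$ via pair symmetry and Bianchi. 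Both routes use exactly the same symmetries of $R$, so yours is a mild (arguably cleaner, since it avoids the self-referential step) repackaging of the paper's final manipulation rather than a different argument; your sign checks in the $X=-V$ and $V=\tfrac12 U$ relabelings are consistent with the paper's conclusion.
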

\begin{proof}
    Note that $(e_i \otimes e_j) e_k = \delta_{ik} e_j$ and hence
    \begin{align*}
        g( (e_i \otimes e_j) \varphi, (e_k \otimes e_l) \varphi) = & \ \sum_{r,s} \sum_{i_1, \ldots, i_p} \delta_{i i_r} \delta_{k i_s} \varphi_{i_1 \ldots j \ldots i_p} \varphi_{i_1 \ldots l \ldots i_p} \\
        = & \ \sum_{r \neq s} \sum_{i_1, \ldots, i_p} \delta_{i i_r} \delta_{k i_s} \varphi_{j i_s i_1 \ldots \widehat{j} \ldots \widehat{i_s} \ldots i_p} \varphi_{i_r l i_1 \ldots \widehat{i_r} \ldots \widehat{l} \ldots i_p} \\
        & \ + \sum_r \sum_{i_1, \dots, i_p} \delta_{i i_r} \delta_{k i_r} \varphi_{j i_1 \ldots \widehat{j} \ldots i_p} \varphi_{l i_1 \ldots \widehat{l} \ldots i_p} \\
        = & \ p(p-1) \sum_{i_3, \ldots, i_{p}}  \varphi_{j k i_3 \ldots i_{p}} \varphi_{i l i_3 \ldots i_{p}} + p \sum_{i_2, \dots, i_{p}} \delta_{i k} \varphi_{j i_2 \ldots i_{p}} \varphi_{l i_2 \ldots i_{p}}.
    \end{align*}
    Since 
    \begin{align*}
        \varphi^{\mathfrak{gl}} = \sum_{i,j} \left( (e_i \otimes e_j)  \varphi \right) \otimes (e_i \otimes e_j)
    \end{align*}
    and $g(\mathcal{R}^2(e_i \otimes e_j),e_k \otimes e_l) = R_{iklj}$ it follows that
    \begin{align*}
        g( \mathcal{R}^2( \varphi^{\mathfrak{gl}}), \varphi^{\mathfrak{gl}}) = & \ \sum_{i,j,k,l} g(\mathcal{R}^2(e_i \otimes e_j),e_k \otimes e_l) g( (e_i \otimes e_j) \varphi, (e_k \otimes e_l) \varphi) \\
        = & \ p(p-1) \sum_{i,j,k,l} \sum_{i_3, \ldots, i_{p}} R_{iklj} \ \varphi_{j k i_3 \ldots i_{p}} \varphi_{i l i_3 \ldots i_{p}} \\
        & \ + p \sum_{i,j,k,l} \sum_{i_2, \dots, i_{p}} \delta_{i k} R_{iklj} \ \varphi_{j i_2 \ldots i_{p}} \varphi_{l i_2 \ldots i_{p}} \\
        = & \ p(p-1) \sum_{i,j,k,l} \sum_{i_3, \ldots, i_{p}} R_{iklj} \ \varphi_{j k i_3 \ldots i_{p}} \varphi_{i l i_3 \ldots i_{p}} \\
        = & \ p(p-1) \sum_{i,j,k,l} \sum_{i_3, \ldots, i_{p}} ( - R_{klij} - R_{likj} ) \ \varphi_{j k i_3 \ldots i_{p}} \varphi_{i l i_3 \ldots i_{p}} \\
        = & \ - p(p-1) \sum_{i,j,k,l} \sum_{i_3, \ldots, i_{p}} R_{jlik} \ \varphi_{kj i_3  \ldots i_{p}} \varphi_{i l i_3 \ldots i_{p}} \\
        & \ - p(p-1) \sum_{i,j,k,l} \sum_{i_3, \ldots, i_{p}} R_{iljk} \ \varphi_{j k i_3 \ldots i_{p}} \varphi_{i l i_3 \ldots i_{p}} \\
        = & \ - p(p-1) \sum_{i,j,k,l} \sum_{i_3, \ldots, i_{p}} R_{iklj} \ \varphi_{j k i_3 \ldots i_{p}} \varphi_{i l i_3 \ldots i_{p}} \\
        & \ - p(p-1) \sum_{i,j,k,l} \sum_{i_3, \ldots, i_{p}} R_{ijkl} \ \varphi_{ij i_3 \ldots i_{p}} \varphi_{k l i_3 \ldots i_{p}} \\
        = & \ - g( \mathcal{R}^2( \varphi^{\mathfrak{gl}}), \varphi^{\mathfrak{gl}})  - p(p-1) \sum_{i,j,k,l} \sum_{i_3, \ldots, i_{p}} R_{ijkl} \ \varphi_{ij i_3 \ldots i_{p}} \varphi_{k l i_3 \ldots i_{p}}.
    \end{align*}
    Comparing the first and last lines completes the proof.
\end{proof}

\begin{proposition}
\label{RicLAndR2}
    If $\varphi \in \Ext^pV,$ then 
    \begin{align*}
        \frac{3}{2} g( \Ric_L( \varphi), \varphi) = g( \mathcal{R}^2( \varphi^{S^2}), \varphi^{S^2}) + p \sum_{i,j} \sum_{i_2, \ldots, i_{p}} R_{ij} \varphi_{i i_2 \ldots i_{p}} \varphi_{j i_2 \ldots i_{p}}.
    \end{align*}
\end{proposition}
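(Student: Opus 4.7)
My plan is to follow the method of Proposition~\ref{CurvatureR2gl}, adapting the computation to the symmetric factor of $V \otimes V$, and to compare the result with a direct component expansion of $g(\operatorname{Ric}_L(\varphi), \varphi)$.

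First I would establish the concrete expression
\[
\varphi^{S^2} \ = \ \tfrac14 \sum_{i,j} (e_i \odot e_j)\varphi \otimes (e_i \odot e_j).
\]
This follows from the orthogonal decomposition $V \otimes V = \bigodot^2 V \oplus \Ext^2 V$ and the splitting $e_i \otimes e_j = \tfrac12(e_i \odot e_j + e_i \wedge e_j)$: the cross-terms in the resulting expansion of $\varphi^{\mathfrak{gl}}$ vanish because $(e_i \odot e_j)\varphi$ is symmetric in $(i,j)$ while $e_i \wedge e_j$ is antisymmetric. Using $g(\mathcal{R}^2(e_i \odot e_j), e_k \odot e_l) = 2(R_{iklj} + R_{ilkj})$ from Remark~\ref{RelationshipToCurvatureOperators}, together with the inner-product identity
\[
g\bigl((e_i \otimes e_j)\varphi, (e_k \otimes e_l)\varphi\bigr) \ = \ p(p-1)\sum_{i_3, \ldots, i_p}\varphi_{jk i_3\ldots i_p}\varphi_{il i_3\ldots i_p} + p\,\delta_{ik}\sum_{i_2,\ldots, i_p}\varphi_{ji_2\ldots i_p}\varphi_{li_2\ldots i_p}
\]
derived inside the proof of Proposition~\ref{CurvatureR2gl}, I would expand $e_i \odot e_j = e_i \otimes e_j + e_j \otimes e_i$ and carry out the resulting contraction. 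The $\delta$-type contributions assemble, via $\sum_i R_{ilij} = R_{lj}$ and the symmetry of the Ricci tensor, into a clean ``Ricci piece'' equal to $\tfrac{p}{2} \sum R_{ij}\varphi_{ii_2 \ldots i_p}\varphi_{j i_2 \ldots i_p}$.

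The ``Riemann piece'' is the main technical obstacle, producing four sums in which the pairs of $R$ do not align with the antisymmetrization pairs of either $\varphi$. The key tool is the identity
\[
\sum R_{abcd}\,\varphi_{ac i_3 \ldots i_p}\varphi_{bd i_3 \ldots i_p} \ = \ \tfrac12 \sum R_{abcd}\,\varphi_{ab i_3 \ldots i_p}\varphi_{cd i_3 \ldots i_p},
\]
obtained by contracting the algebraic Bianchi identity $R_{abcd} + R_{acdb} + R_{adbc} = 0$ with $\varphi_{ac}\varphi_{bd}$ and using the antisymmetry of $\varphi$. After applying this (together with the pair symmetry $R_{abcd} = R_{cdab}$) to each of the four subterms, the Riemann piece collapses to $-\tfrac{3p(p-1)}{4} Q$, where $Q := \sum R_{abcd}\,\varphi_{ab i_3 \ldots i_p}\varphi_{cd i_3 \ldots i_p}$.

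Finally, I would compute $g(\operatorname{Ric}_L(\varphi), \varphi)$ by evaluating the definition in components via the derivation action~\eqref{DefinitionActionOnTensors} and splitting the sum over position pairs $(s, r)$ into diagonal ($r = s$) and off-diagonal ($r \neq s$) contributions. The diagonal part produces $p \sum R_{ij}\varphi_{ii_2 \ldots i_p}\varphi_{ji_2 \ldots i_p}$ via $\sum_j R_{ajjb} = -R_{ab}$, while the off-diagonal part produces $-\tfrac{p(p-1)}{2} Q$ after one more application of the same Bianchi identity. Combining the two calculations, both $\tfrac{3}{2} g(\operatorname{Ric}_L(\varphi),\varphi)$ and $g(\mathcal{R}^2(\varphi^{S^2}), \varphi^{S^2}) + p\sum R_{ij}\varphi_{ii_2 \ldots i_p}\varphi_{ji_2 \ldots i_p}$ reduce to $\tfrac{3p}{2}\sum R_{ij}\varphi_{ii_2 \ldots i_p}\varphi_{j i_2 \ldots i_p} - \tfrac{3p(p-1)}{4}Q$, which establishes the claim.
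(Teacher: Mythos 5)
Your proposal is correct, but it takes a genuinely different route from the paper. The paper never evaluates $g(\mathcal{R}^2(\varphi^{S^2}),\varphi^{S^2})$ directly: it quotes the classical Bochner formula $g(\Ric_L(\varphi),\varphi)=-\tfrac{p(p-1)}{2}\sum R_{ijkl}\varphi_{iji_3\ldots i_p}\varphi_{kli_3\ldots i_p}+p\sum R_{ij}\varphi_{ii_2\ldots i_p}\varphi_{ji_2\ldots i_p}$ from the literature, identifies the Riemann term with $g(\mathcal{R}^2(\varphi^{\mathfrak{gl}}),\varphi^{\mathfrak{gl}})$ via Proposition~\ref{CurvatureR2gl}, splits $\varphi^{\mathfrak{gl}}=\varphi^{S^2}+\varphi^{\mathfrak{so}}$ with no cross terms (Remark~\ref{RelationshipToCurvatureOperators}), and then uses $\mathcal{R}^2|_{\Ext^2V}=-\tfrac12\,\mathcal{R}^1|_{\Ext^2V}$ together with $g(\mathcal{R}^1(\varphi^{\mathfrak{so}}),\varphi^{\mathfrak{so}})=g(\Ric_L(\varphi),\varphi)$ (Remark~\ref{TranslationRicLCurvOperator}); this yields a linear equation in $g(\Ric_L(\varphi),\varphi)$ whose solution is precisely the factor $\tfrac32$, with essentially no new index computation. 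You instead compute both sides in components: you re-derive the classical Weitzenb\"ock curvature term rather than citing it, and you evaluate $g(\mathcal{R}^2(\varphi^{S^2}),\varphi^{S^2})$ from $\varphi^{S^2}=\tfrac14\sum_{i,j}(e_i\odot e_j)\varphi\otimes(e_i\odot e_j)$, using the Bianchi-derived identity $\sum R_{abcd}\,\varphi_{aci_3\ldots i_p}\varphi_{bdi_3\ldots i_p}=\tfrac12\sum R_{abcd}\,\varphi_{abi_3\ldots i_p}\varphi_{cdi_3\ldots i_p}$ to collapse the four Riemann contractions; I checked that your normalization of $\varphi^{S^2}$ and the resulting coefficient $-\tfrac{3p(p-1)}{4}Q$ are correct, and that the totals on both sides agree. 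What the paper's argument buys is brevity and the reuse of Proposition~\ref{CurvatureR2gl} as a black box; what yours buys is self-containedness and an explicit formula for $g(\mathcal{R}^2(\varphi^{S^2}),\varphi^{S^2})$ in terms of $Q$ and the Ricci contraction, at the cost of heavier index bookkeeping --- in particular, when writing it up you must fix one sign convention for the Ricci contractions $\sum_i R_{ilij}$ consistently across the ``Ricci piece'' and the diagonal part of $g(\Ric_L(\varphi),\varphi)$, as this is the only place where a sign slip could silently spoil the comparison.
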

\begin{proof}
Combining the Bochner formula on forms in  
\cite{BesseEinstein,NPWBettiNumbersAndCOSK} with Proposition~\ref{CurvatureR2gl} we have
\begin{align*}
    g( \Ric_L(\varphi), \varphi) = & \ - \frac{p(p-1)}{2}  \sum_{i,j,k,l} \sum_{i_3, \ldots, i_{p}} R_{ijkl} \ \varphi_{ij i_3 \ldots i_{p}} \varphi_{k l i_3 \ldots i_{p}} + p \sum_{i,j} \sum_{i_2, \ldots, i_{p}} R_{ij} \varphi_{i i_2 \ldots i_p} \varphi_{j i_2 \ldots i_p} \\
    = & \ g( \mathcal{R}^2( \varphi^{\mathfrak{gl}}), \varphi^{\mathfrak{gl}}) + p \sum_{i,j} \sum_{i_2, \ldots, i_{p}} R_{ij} \varphi_{i i_2 \ldots i_p} \varphi_{j i_2 \ldots i_p}.
\end{align*}

Furthermore, Remark~\ref{RelationshipToCurvatureOperators} implies  
\begin{align*}
    g( \mathcal{R}^2(\varphi^{\mathfrak{gl}}), \varphi^{\mathfrak{gl}}) & = g( \mathcal{R}^2(\varphi^{S^2}), \varphi^{S^2}) + g( \mathcal{R}^2(\varphi^{\mathfrak{so}}), \varphi^{\mathfrak{so}}) \\
    & = g( \mathcal{R}^2(\varphi^{S^2}), \varphi^{S^2}) - \frac{1}{2} g( \mathcal{R}^1(\varphi^{\mathfrak{so}}), \varphi^{\mathfrak{so}}) \\
    & = g( \mathcal{R}^2(\varphi^{S^2}), \varphi^{S^2}) - \frac{1}{2} g( \Ric_L(\varphi), \varphi)
\end{align*}
and the claim follows.
\end{proof}

\begin{remark}
\label{TranslationRicLCurvOperator}
\normalfont 
    Note that indeed
\begin{align*}
    g( \mathcal{R}^1(\varphi^{\mathfrak{so}}), \varphi^{\mathfrak{so}}) = 2 g( \mathfrak{R} (\varphi^{\mathfrak{so}}), \varphi^{\mathfrak{so}}) = g( \Ric_L(\varphi), \varphi)
\end{align*}
due to \cite[Lemma 9.3.3]{PetersenRiemannianGeometry} and the definition of the curvature operator in Section~\ref{SectionCurvatureTensors}. 
\end{remark}

\begin{lemma}
\label{CurvatureTermWithCalabi}
    Let $R$ be an algebraic K\"ahler curvature operator on $(V,J,g)$ with $\dim_{\mathbf{R}}V=2n$. Let $\Sigma_{\nu} \in \bigodot^2 V^{1,0}$ be an eigenbasis for the associated Calabi curvature operator $\mathcal{C}$ with corresponding eigenvalues $\sigma_{\nu}$ and let $\varphi \in \Ext^p (V^{\mathbf{C}})^{*}$ be a real form. If the basis $\{ \Sigma_{\nu} \}$ is unitary, i.e., if $g(\Sigma_{\nu}, \overline{\Sigma_{\mu}})=\delta_{\nu \mu},$ then
    \begin{align*}
        g(\Ric_L(\varphi), \varphi) = 2 \sum_{\nu=1}^{\binom{n+1}{2}} \sigma_{\nu} | \Sigma_{\nu} \varphi |^2.
    \end{align*}
\end{lemma}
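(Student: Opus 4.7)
The plan is to combine Proposition~\ref{RicLAndR2} with a complex-type decomposition of the symmetric tensor space. Complexifying, $\bigodot^2 V^{\mathbf{C}}$ decomposes as $\bigodot^2 V^{1,0} \oplus (V^{1,0} \odot V^{0,1}) \oplus \bigodot^2 V^{0,1}$, and the K\"ahler property of $R$ (namely $R(X,Y,\cdot,\cdot)=0$ whenever $X,Y\in V^{1,0}$) ensures that $\mathcal{R}^2$ preserves this decomposition and that the three pieces pair diagonally under the extended bilinear $g$. By Remark~\ref{RelationshipToCurvatureOperators}, the restriction of $\mathcal{R}^2$ to $\bigodot^2 V^{1,0}$ is exactly the Calabi operator $\mathcal{C}$.

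Using the unitary eigenbasis $\{\Sigma_\nu\}$ of $\mathcal{C}$ and the identification of $(\bigodot^2 V^{1,0})^{*}$ with $\bigodot^2 V^{0,1}$ induced by $g$, the formula $T^{\mathfrak{g}}=\sum_\alpha \Xi_\alpha T \otimes \bar\Xi_\alpha$ from the text, applied with $\mathfrak{g}=\bigodot^2 V^{1,0}$, gives $\varphi^{\bigodot^2 V^{1,0}}=\sum_\nu (\Sigma_\nu\varphi)\otimes \bar\Sigma_\nu$, and the reality of $\varphi$ (allowing us to replace $\overline{\Sigma_\nu\varphi}$ by $\bar\Sigma_\nu\varphi$) yields the conjugate formula $\varphi^{\bigodot^2 V^{0,1}}=\sum_\nu (\bar\Sigma_\nu\varphi)\otimes \Sigma_\nu$. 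A direct pairing, together with unitarity, then yields
\[
g\bigl(\mathcal{R}^2(\varphi^{(2,0)}),\,\varphi^{(0,2)}\bigr) \;=\; \sum_\nu \sigma_\nu\, |\Sigma_\nu\varphi|^2,
\]
and by self-adjointness of $\mathcal{R}^2$ the $(2,0)$- and $(0,2)$-pieces of $\varphi^{S^2}$ together contribute $2\sum_\nu \sigma_\nu|\Sigma_\nu\varphi|^2$ to $g(\mathcal{R}^2(\varphi^{S^2}),\varphi^{S^2})$.

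On the remaining mixed $(1,1)$-symmetric piece, the matrix coefficients of $\mathcal{R}^2$ reduce by the K\"ahler condition to $2R(Z_a,\bar Z_d, Z_c, \bar Z_b)$, while the explicit Ricci term in Proposition~\ref{RicLAndR2} involves only the K\"ahler Ricci tensor $R_{a\bar b}=\sum_c R(Z_a,\bar Z_c,Z_c,\bar Z_b)$, which is itself a trace of $\mathcal{C}$. Expanding both contributions in the Calabi eigenbasis and using the K\"ahler--Bianchi symmetry $R(X,\bar Y, Z, \bar W)=R(X,\bar W, Z, \bar Y)$, the mixed piece plus the Ricci term contribute an additional $\sum_\nu \sigma_\nu|\Sigma_\nu\varphi|^2$. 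The right-hand side of Proposition~\ref{RicLAndR2} therefore equals $3\sum_\nu\sigma_\nu|\Sigma_\nu\varphi|^2$, and dividing by $\tfrac{3}{2}$ yields the claim. The main obstacle is this last step: verifying that the combined mixed-piece and Ricci contributions give exactly one more copy of $\sum_\nu\sigma_\nu|\Sigma_\nu\varphi|^2$ requires careful bookkeeping of symmetrization factors ($\odot$ vs.~$\otimes$), of the factor $\tfrac12$ from $\mathcal{R}^2$ applied to mixed tensors, and of the derivation sign appearing when $\Sigma_\nu$ acts on $p$-forms.
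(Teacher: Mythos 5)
Your overall framework is the same as the paper's --- complexify, split $\varphi^{S^2}$ by type, and use that $\mathcal{R}^2$ restricted to $\bigodot^2 V^{1,0}$ is the Calabi operator --- and your arithmetic is consistent with the true answer (the pure pieces give $2\sum_\nu\sigma_\nu|\Sigma_\nu\varphi|^2$, and $3\sum_\nu\sigma_\nu|\Sigma_\nu\varphi|^2$ divided by $\frac{3}{2}$ is the claim). But the proposal has a genuine gap at exactly the step you yourself flag as ``the main obstacle'': the assertion that the mixed symmetric piece plus the Ricci term contribute exactly one more copy of $\sum_\nu \sigma_\nu|\Sigma_\nu\varphi|^2$ is the entire content of the lemma, and it is asserted rather than proven. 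Moreover, the mechanism you sketch cannot deliver it as described. Writing $M$ for the contribution of $\varphi^{V^{1,0}\odot V^{0,1}}$, your claim amounts to $M + p\sum_{i,j} R_{ij}\varphi_{i i_2\ldots i_p}\varphi_{j i_2\ldots i_p} = \sum_\nu\sigma_\nu|\Sigma_\nu\varphi|^2$. The right-hand side is not a Ricci contraction: expanded in indices it contains genuine $p(p-1)$-type terms quadratic in the four-index curvature components. Hence, when $M$ is expanded, its $p(p-1)$-terms must survive and reproduce those; they cannot all be killed by the K\"ahler symmetry $R(X,\bar Y,Z,\bar W)=R(X,\bar W,Z,\bar Y)$ played against the antisymmetry of $\varphi$, which is the only tool your sketch invokes. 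What your route actually needs is a separate algebraic identity (for each $S\in\bigodot^2V^{1,0}$, relating the type-preserving quadratic form built from $S$ and $\bar S$ acting on $\varphi$, plus a trace term, to $|S\varphi|^2$), and this is neither stated nor proved.

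The paper avoids ever isolating the symmetric mixed piece: it works at the level of $\varphi^{\mathfrak{gl}_{\mathbf{C}}}$ with the identity $g(\Ric_L(\varphi),\varphi)=g(\mathcal{R}^2(\varphi^{\mathfrak{gl}}),\varphi^{\mathfrak{gl}})+p\sum_{i,j}R_{ij}\varphi_{i i_2\ldots i_p}\varphi_{j i_2\ldots i_p}$ from the proof of Proposition~\ref{RicLAndR2}, i.e.\ before the $\frac{3}{2}$-symmetrization you start from. There the antisymmetric mixed ($\mathfrak{u}(n)$-type) contribution is folded back into $\Ric_L$ via $\mathcal{R}^1$ (Remark~\ref{RelationshipToCurvatureOperators}), and for the remaining $\mathfrak{gl}$-mixed term the $p(p-1)$-part genuinely does vanish by the K\"ahler symmetry, leaving a pure Ricci contraction that cancels the explicit Ricci term; only then do the $\bigodot^2V^{1,0}$ and $\bigodot^2V^{0,1}$ pieces remain. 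To complete your $S^2$-based variant you must either prove the missing algebraic identity directly or revert to the $\mathfrak{gl}$-level bookkeeping; as written, the central computation is absent.
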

\begin{proof}
    Recall from the proof of Proposition~\ref{RicLAndR2} that
\begin{align*}
g( \Ric_L(\varphi), \varphi) - p \sum_{i,j} \sum_{|I|=p-1} R_{ij} \varphi_{i i_2 \ldots i_p} \varphi_{ji_2 \ldots i_p} = g \left( \mathcal{R}^2 ( \varphi^{\mathfrak{gl}}), \varphi^{\mathfrak{gl}} \right) = 
    g \left( \mathcal{R}^2 ( \varphi^{\mathfrak{gl}_{\mathbf{C}}}), \overline{\varphi^{\mathfrak{gl}_{\mathbf{C}}}} \right),
\end{align*}
where $\mathfrak{gl}_{\mathbf{C}} = \mathfrak{gl} \otimes_{\mathbf{R}} {\mathbf{C}}.$

To compute $g \left( \mathcal{R}^2 ( \varphi^{\mathfrak{gl}_{\mathbf{C}}}), \overline{\varphi^{\mathfrak{gl}_{\mathbf{C}}}} \right)$ we first note that due to Remark~\ref{TraceInZCoordinates}
\begin{align*}
    g( (Z_a & \otimes \overline{Z_b}) \varphi, (\overline{Z_c} \otimes Z_d) \varphi) = \\
    = & \ \sum_{r, s} \sum_{i_1, \ldots, i_p} \varphi (e_{i_1}, \ldots, (Z_a \otimes \overline{Z_b}) e_{i_r}, \ldots, e_{i_p}) \varphi (e_{i_1}, \ldots, ( \overline{Z_c} \otimes Z_d ) e_{i_s}, \ldots, e_{i_p}) \\
    = & \ \sum_{r \neq s} \sum_{e,f} \sum_{|I|=p-2} \left\lbrace \varphi(e_{i_1}, \ldots, (Z_a \otimes \overline{Z_b}) Z_e, \ldots, Z_f, \ldots, e_{i_p}) \varphi (e_{i_1}, \ldots, \overline{Z_e}, \ldots, ( \overline{Z_c} \otimes Z_d ) \overline{Z_f}, \ldots, e_{i_p}) \right. \\
    & \hspace{2.8cm} + \varphi(e_{i_1}, \ldots, (Z_a \otimes \overline{Z_b}) Z_e, \ldots, \overline{Z_f}, \ldots, e_{i_p}) \varphi (e_{i_1}, \ldots, \overline{Z_e}, \ldots, ( \overline{Z_c} \otimes Z_d ) Z_f, \ldots, e_{i_p}) \\
    & \hspace{2.8cm} + \varphi(e_{i_1}, \ldots, (Z_a \otimes \overline{Z_b}) \overline{Z_e}, \ldots, Z_f, \ldots, e_{i_p}) \varphi (e_{i_1}, \ldots, Z_e, \ldots, ( \overline{Z_c} \otimes Z_d ) \overline{Z_f}, \ldots, e_{i_p}) \\
    & \hspace{2.8cm}  \left. + \varphi(e_{i_1}, \ldots, (Z_a \otimes \overline{Z_b}) \overline{Z_e}, \ldots, \overline{Z_f}, \ldots, e_{i_p}) \varphi (e_{i_1}, \ldots, Z_e, \ldots, ( \overline{Z_c} \otimes Z_d ) Z_f, \ldots, e_{i_p}) \right\rbrace \\
    & \ + \sum_{r} \sum_{e} \sum_{|I|=p-1} \left\lbrace \varphi (e_{i_1}, \ldots, (Z_a \otimes \overline{Z_b}) Z_e, \ldots, e_{i_p}) \varphi (e_{i_1}, \ldots, ( \overline{Z_c} \otimes Z_d ) \overline{Z_e}, \ldots, e_{i_p}) \right. \\
    & \hspace{3.2cm} \left. + \varphi (e_{i_1}, \ldots, (Z_a \otimes \overline{Z_b}) \overline{Z_e}, \ldots, e_{i_p}) \varphi (e_{i_1}, \ldots, ( \overline{Z_c} \otimes Z_d ) Z_e, \ldots, e_{i_p}) \right\rbrace \\
    = & \ \sum_{r \neq s} \sum_{e,f} \sum_{|I|=p-2}  \delta_{ae} \delta_{cf} \varphi_{\bar{b} \bar{f}i_3 \ldots i_p} \varphi_{e d i_3 \ldots i_p} + \sum_r \sum_{e} \sum_{|I|=p-1}  \delta_{a e} \delta_{c e} \varphi_{\bar{b} i_2 \ldots i_p} \varphi_{d i_2 \ldots i_p} \\
    = & \ p(p-1) \sum_{|I|=p-2} \varphi_{\bar{b} \bar{c} i_3 \ldots i_p} \varphi_{a d i_3 \ldots i_p} + p \sum_{|I|=p-1} \delta_{a c} \varphi_{\bar{b} i_2 \ldots i_p} \varphi_{d i_2 \ldots i_p}.
\end{align*}

Moreover, note that 
\begin{align*}
    g( \mathcal{R}^2( \overline{Z_a} \otimes Z_b), Z_c \otimes \overline{Z_d} ) = R_{\bar{a} c \bar{d} b}
\end{align*}
and 
\begin{align*}
    \sum_{b,c} R_{\bar{a} c \bar{d} b} \varphi_{\bar{b} \bar{c} i_3 \ldots i_p} \varphi_{a d i_3 \ldots i_p} = 0
\end{align*}
since $R_{\bar{a} c \bar{d} b} = R_{\bar{a} b \bar{d} c}$ but $\varphi_{\bar{b} \bar{c} i_3 \ldots i_p} = - \varphi_{\bar{c} \bar{b} i_3 \ldots i_p}.$
It follows that 
\begin{align*}
    \sum_{a,b,c,d}  g( & ( Z_a \otimes \overline{Z_b} )\varphi, (\overline{Z_c} \otimes Z_d ) \varphi )  \ g( \mathcal{R}^2( \overline{Z_a} \otimes Z_b), Z_c \otimes \overline{Z_d}) = \\ 
    = & \ p(p-1) \sum_{a,b,c,d} \sum_{|I|=p-2} R_{\bar{a} c \bar{d} b} \varphi_{\bar{b} \bar{c} i_3 \ldots i_p} \varphi_{a d i_3 \ldots i_p} + p \sum_{a,b,c,d} \sum_{|I|=p-1} \delta_{a c} R_{\bar{a} c \bar{d} b} \varphi_{\bar{b} i_2 \ldots i_p} \varphi_{d i_2 \ldots i_p} \\
    = & \ p \sum_{a,b,d} \sum_{|I|=p-1} R_{\bar{a} a \bar d b} \varphi_{\bar{b} i_2 \ldots i_p} \varphi_{d i_2 \ldots i_p} \\
    = & \ - p \sum_{b,d} \sum_{|I|=p-1} R_{b \bar{d}} \varphi_{\bar{b} i_2 \ldots i_p} \varphi_{d i_2 \ldots i_p} \\
    = & \ - \frac{p}{2} \sum_{i,j} \sum_{|I|=p-1} R_{ij} \varphi_{i i_2 \ldots i_p} \varphi_{ji_2 \ldots i_p}.
\end{align*}
The decomposition $V_{\mathbf{C}} = V^{1,0} \oplus V^{0,1}$ induces a decomposition of $\varphi^{\mathfrak{gl}_{\mathbf{C}}},$ namely
\begin{align*}
    \varphi^{\mathfrak{gl}_{\mathbf{C}}} = & \ \sum_{a,b} (Z_a \otimes Z_b) \varphi \otimes  (\overline{Z_a} \otimes \overline{Z_b}) + 
    \sum_{a,b} (Z_a \otimes \overline{Z_b}) \varphi \otimes  (\overline{Z_a} \otimes Z_b) \\ 
    & \ + \sum_{a,b} (\overline{Z_a} \otimes Z_b) \varphi \otimes  (Z_a \otimes \overline{Z_b}) +
    \sum_{a,b} ( \overline{Z_a} \otimes \overline{Z_b}) \varphi \otimes  (Z_a \otimes Z_b) \\
    = & \ \varphi^{V^{1,0} \otimes V^{1,0}} + \varphi^{V^{1,0} \otimes V^{0,1}} +\varphi^{V^{0,1} \otimes V^{1,0}} +
    \varphi^{V^{0,1} \otimes V^{0,1}}.
\end{align*}

Note that $\mathcal{R}^2$ operates on $\varphi^{\mathfrak{gl}_{\mathbf{C}}}$ by acting on the component in $V_{\mathbf{C}} \otimes V_{\mathbf{C}}.$ Since $R$ is K\"ahler, we have, for example
\begin{align*}
    g( \mathcal{R}^2( Z_a \otimes Z_b ), Z_c \otimes \overline{Z_d}) & = R(Z_a, Z_c, \overline{Z_d}, Z_b ) = 0 \ \text{ or } \\
    g( \mathcal{R}^2( Z_a \otimes \overline{Z_b} ), Z_c \otimes \overline{Z_d}) & = R(Z_a, Z_c, \overline{Z_d}, \overline{Z_b} ) = 0.
\end{align*}
It follows that 
\begin{align*}
    g \left( \mathcal{R}^2 ( \varphi^{\mathfrak{gl}_{\mathbf{C}}}), \overline{\varphi^{\mathfrak{gl}_{\mathbf{C}}}} \right) =  
    & \ g \left( \mathcal{R}^2 ( \varphi^{V^{1,0} \otimes V^{1,0}}),  \overline{\varphi^{V^{1,0} \otimes V^{1,0}}} \right) + g \left( \mathcal{R}^2 ( \varphi^{V^{0,1} \otimes V^{0,1}}),  \overline{\varphi^{V^{0,1} \otimes V^{0,1}}} \right) \\
    & + \ g \left( \mathcal{R}^2 ( \varphi^{V^{1,0} \otimes V^{0,1}} ),  \overline{\varphi^{V^{1,0} \otimes V^{0,1}}} \right) + g \left( \mathcal{R}^2 ( \varphi^{V^{0,1} \otimes V^{1,0}} ),  \overline{\varphi^{V^{0,1} \otimes V^{1,0}}} \right) \\
    = & \ 2 g \left( \mathcal{R}^2 ( \varphi^{V^{1,0} \otimes V^{1,0}}),  \overline{\varphi^{V^{1,0} \otimes V^{1,0}}} \right) + 2 g \left( \mathcal{R}^2 ( \varphi^{V^{1,0} \otimes V^{0,1}} ),  \overline{\varphi^{V^{1,0} \otimes V^{0,1}}} \right).
    \end{align*}
    
Furthermore, 
\begin{align*}
    \varphi^{V^{1,0} \otimes V^{1,0}} = \frac{1}{4} \sum_{a,b} (Z_a \odot Z_b) \varphi \otimes (\overline{Z_a} \odot \overline{Z_b}) + \frac{1}{2} \sum_{a < b} (Z_a \wedge Z_b) \varphi \otimes (\overline{Z_a} \wedge \overline{Z_b}) = \varphi^{{\bigodot^{2} V^{1,0}}} + \varphi^{\Ext^2 V^{1,0}}.
\end{align*}
From Remark~\ref{RelationshipToCurvatureOperators} we obtain
\begin{align*}
    g \left( \mathcal{R}^2 ( \varphi^{V^{1,0} \otimes V^{1,0}} ),  \overline{\varphi^{V^{1,0} \otimes V^{1,0}}} \right) = & \ g \left( \mathcal{R}^2 ( \varphi^{\bigodot^{2} V^{1,0}}), \overline{\varphi^{\bigodot^{2} V^{1,0}}} \right) + g \left( \mathcal{R}^2 ( \varphi^{\Ext^2 V^{1,0}}), \overline{\varphi^{\Ext^2 V^{1,0}}} \right) \\
    = & \ g \left( \mathcal{R}^2 ( \varphi^{\bigodot^{2} V^{1,0}}), \overline{\varphi^{\bigodot^{2} V^{1,0}}} \right)  - \frac{1}{2} g \left( \mathcal{R}^1 ( \varphi^{\Ext^2 V^{1,0}}), \overline{\varphi^{\Ext^2 V^{1,0}}} \right) \\
    = & \ g \left( \mathcal{R}^2 ( \varphi^{\bigodot^{2} V^{1,0}}), \overline{\varphi^{\bigodot^{2} V^{1,0}}} \right)
\end{align*}
since any K\"ahler curvature operator vanishes on $\Ext^2 V^{1,0}.$

Overall we obtain
\begin{align*}
    g \left( \mathcal{R}^2 ( \varphi^{\mathfrak{gl}_{\mathbf{C}}}), \overline{\varphi^{\mathfrak{gl}_{\mathbf{C}}}} \right) = & \ 2 \sum_{a,b,c,d}  g( ( Z_a \otimes \overline{Z_b} )\varphi, (\overline{Z_c} \otimes Z_d ) \varphi )  \ g( \mathcal{R}^2( \overline{Z_a} \otimes Z_b), Z_c \otimes \overline{Z_d}) \\ 
    & \ + 2 g \left( \mathcal{R}^2 ( \varphi^{\bigodot^{2} V^{1,0}}), \overline{\varphi^{\bigodot^{2} V^{1,0}}} \right) \\
    = & \ - p \sum_{i,j} \sum_{|I|=p-1} R_{ij} \varphi_{i i_2 \ldots i_p} \varphi_{ji_2 \ldots i_p} + 2 g \left( \mathcal{R}^2 ( \varphi^{\bigodot^{2} V^{1,0}}), \overline{\varphi^{\bigodot^{2} V^{1,0}}} \right)
\end{align*}
and hence, from Remark~\ref{RelationshipToCurvatureOperators}, we have
\begin{align*}
    g( \Ric_{L}( \varphi), \varphi ) = & \ 2 g \left( \mathcal{R}^2 ( \varphi^{\bigodot^{2} V^{1,0}}), \overline{\varphi^{\bigodot^{2} V^{1,0}}} \right) 
    =  2 g \left( \mathcal{C} ( \varphi^{\bigodot^{2} V^{1,0}}), \overline{\varphi^{\bigodot^{2} V^{1,0}}} \right).
\end{align*}
\end{proof}

\section{Main estimates and proofs of Theorems~\ref{thm:main-theorem-1}--\ref{thm:main-theorem-3}}
\label{SectionProofMainTheorems}

In this section, we provide the proofs of the main theorems. The starting point is the expression for the curvature term in the Bochner-Weitzenb\"ock formula on forms,
   \begin{align*}
        g(\Ric_L(\varphi), \varphi) = 2 \sum_{\nu} \sigma_{\nu} | \Sigma_{\nu} \varphi |^2
    \end{align*}
where $\Sigma_a$ is a local unitary frame  with $\mathcal{C}(\Sigma_a) = \sigma_a \Sigma_a$, as established in Lemma~\ref{CurvatureTermWithCalabi}. Following the general principles of \cite{NPWBettiNumbersAndCOSK,PW1,PW2}, the strategy is to compute $| \varphi^{\bigodot^{2} V^{1,0}} |^2  =  \sum_a  | \Sigma_a \varphi |^2$ for a primitive real form $\varphi$ and to establish an estimate of the form $| S \varphi |^2 \leq \frac{1}{\Upsilon} | S |^2 | \varphi^{\bigodot^{2} V^{1,0}} |^2.$ This is accomplished in Lemma~\ref{NormHolomSymHatForms} and Proposition~\ref{MainEstimateForForms}. The weight principle in Proposition~\ref{WeightPrinciple} then provides conditions on eigenvalues for the Calabi curvature operator so that the curvature term $g(\Ric_L(\varphi), \varphi)$ is nonnegative.

\begin{proposition}
\label{NormZBarZinserted}
    If $\varphi \in \Ext^{p,q} V^{*},$ then
    \begin{align*}
        (p+q)(p+q-1) \sum_{a,b} \left| \iota_{Z_a} \iota_{\overline{Z_b}} \varphi \right|^2 = pq | \varphi |^2.
    \end{align*}
\end{proposition}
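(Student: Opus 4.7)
The plan is to verify the identity by a direct coordinate computation in a unitary $Z$-basis, exploiting the interplay between the antisymmetry of $\varphi$ and its type-$(p,q)$ support. The essential observation is a combinatorial identity relating the full norm of $\varphi$ to sums of squared components in which the arguments are partitioned into holomorphic and antiholomorphic blocks.

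First I would expand $\varphi$ in the basis $\{Z_c, \overline{Z_d}\}$ of $V^{\mathbf{C}}$. Since $\varphi$ has type $(p,q)$ and is antisymmetric, the only nonzero components $\varphi(E_{\mu_1}, \ldots, E_{\mu_{p+q}})$ with $E_{\mu} \in \{Z_a, \overline{Z_b}\}$ occur when exactly $p$ of the arguments are holomorphic. By antisymmetry, the modulus of each such component depends only on the multi-indices $(a_i)$ and $(b_j)$, not on which of the $\binom{p+q}{p}$ orderings of holomorphic versus antiholomorphic slots is chosen. Therefore,
\begin{align*}
|\varphi|^2 = \binom{p+q}{p} \sum_{a_1, \ldots, a_p, b_1, \ldots, b_q} \bigl|\varphi(Z_{a_1}, \ldots, Z_{a_p}, \overline{Z_{b_1}}, \ldots, \overline{Z_{b_q}})\bigr|^2.
\end{align*}
Applying the same principle to the $(p-1, q-1)$-form $\iota_{Z_a} \iota_{\overline{Z_b}} \varphi$, whose remaining $p+q-2$ slots must split as $p-1$ holomorphic and $q-1$ antiholomorphic arguments, and summing over $a$ and $b$ yields
\begin{align*}
\sum_{a, b} \bigl|\iota_{Z_a} \iota_{\overline{Z_b}} \varphi\bigr|^2 = \binom{p+q-2}{p-1} \sum_{a_1, \ldots, a_p, b_1, \ldots, b_q} \bigl|\varphi(Z_{a_1}, \ldots, Z_{a_p}, \overline{Z_{b_1}}, \ldots, \overline{Z_{b_q}})\bigr|^2,
\end{align*}
since the signs introduced by reordering arguments of the antisymmetric form are absorbed by taking moduli.

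Combining the two displays and simplifying the ratio
\begin{align*}
\frac{\binom{p+q-2}{p-1}}{\binom{p+q}{p}} = \frac{pq}{(p+q)(p+q-1)}
\end{align*}
yields the claim. The only substantive point is tracking the combinatorial factor $\binom{p+q}{p}$ arising from the antisymmetry/type interaction; once this is secured, the remainder of the argument is mechanical counting.
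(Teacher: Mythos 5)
Your proof is correct and takes essentially the same route as the paper's: a direct combinatorial computation in a unitary frame, using only the antisymmetry and the $(p,q)$-type of $\varphi$ to count which components survive. The only cosmetic difference is that you sum over the components of a general $\varphi$ (via the slot-counting factor $\binom{p+q}{p}$ and its analogue $\binom{p+q-2}{p-1}$ for the contracted form), whereas the paper evaluates both sides on the monomials $Z^I \wedge \overline{Z^J}$, which diagonalize both quadratic forms, and lets the claim follow from there.
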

\begin{proof}
    Note that if $Z^I \wedge \overline{Z^J} = Z^{i_1} \wedge \ldots \wedge Z^{i_p} \wedge \overline{Z^{j_1}} \wedge \ldots \wedge \overline{Z^{j_q}},$ then
    \begin{align*}
    \left| Z^I \wedge \overline{Z^J} \right|^2 & = (p+q)!, \\
        \left| \iota_{Z_a} \iota_{\overline{Z_b}} Z^I \wedge \overline{Z^J} \right|^2 & = \begin{cases}
            (p+q-2)! & \ \text{ if } a \in I \text{ and } b \in J, \\
            0 & \ \text{ otherwise}
        \end{cases}
    \end{align*}
    and the claim follows.
\end{proof}

\begin{lemma}
\label{NormHolomSymHatForms}
    A real form $\varphi \in \Ext^{p,q} V^{*} \oplus \Ext^{q,p} V^{*}$ satisfies
    \begin{align*}
        \left| \varphi^{\bigodot^{2} V^{1,0}} \right|^2  =   \frac{1}{4}\left((p+q)(n+1) - 2pq\right)|\varphi |^2 - \frac{1}{2(p+q)(p+q-1)}|\Lambda( \varphi) |^2,
    \end{align*}
where $\Lambda$ is the formal adjoint of the Lefschetz operator. In particular, if $\varphi$ is primitive, then
\begin{align*}
\left| \varphi^{\bigodot^{2} V^{1,0}} \right|^2 = \frac{1}{4} \left((p+q)(n+1) - 2pq\right)|\varphi|^2. 
\end{align*}
\end{lemma}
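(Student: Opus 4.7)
The plan is to unravel $|\varphi^{\bigodot^{2}V^{1,0}}|^{2} = \sum_\nu |\Sigma_\nu \varphi|^{2}$ (Hermitian norm) for a unitary eigenbasis of $\bigodot^{2}V^{1,0}$ and reduce it to the only available $\msf{U}(n)$-invariants, namely $|\varphi|^{2}$ and $|\Lambda\varphi|^{2}$. The key algebraic input is the derivation-action formula $(X\odot Y)\varphi = -X^{\flat}\wedge\iota_{Y}\varphi - Y^{\flat}\wedge\iota_{X}\varphi$, which follows directly from $(u\otimes v)z = g(u,z)v$ and definition \eqref{DefinitionActionOnTensors}; for $X = Z_{a}\in V^{1,0}$ one has $X^{\flat} = \overline{Z^{a}}$.

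First I would choose the Hermitian-orthonormal basis $\Sigma_{aa} = Z_{a}\otimes Z_{a}$, $\Sigma_{ab} = \tfrac{1}{\sqrt{2}}Z_{a}\odot Z_{b}$ for $a<b$. Expanding $\sum_{a\leq b}|\Sigma_{ab}\varphi|^{2}$ and collecting diagonal and off-diagonal contributions collapses to
\begin{equation*}
|\varphi^{\bigodot^{2}V^{1,0}}|^{2} \;=\; \tfrac{1}{2}\sum_{a,b}|\overline{Z^{a}}\wedge\iota_{Z_{b}}\varphi|^{2} \;+\; \tfrac{1}{2}\sum_{a,b}\langle\overline{Z^{a}}\wedge\iota_{Z_{b}}\varphi,\,\overline{Z^{b}}\wedge\iota_{Z_{a}}\varphi\rangle,
\end{equation*}
with the cross sum automatically real by the $(a,b)\leftrightarrow(b,a)$ symmetry of the pairing.

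Next, for pure type $\alpha\in\Ext^{p,q}V^{*}$ I would reduce each sum. The tensor-norm adjoint of $\overline{Z^{a}}\wedge$ is a degree-dependent multiple of $\iota_{\overline{Z_{a}}}$ (the factor arising from the convention $|e_{i_{1}}\wedge\cdots\wedge e_{i_{k}}|^{2} = k!$), and together with the elementary identities $\sum_{b}Z^{b}\wedge\iota_{Z_{b}} = p\cdot \mathrm{id}$ on $\Ext^{p,q}$ and $\sum_{b}|\iota_{Z_{b}}\alpha|^{2} = \tfrac{p}{p+q}|\alpha|^{2}$, this reduces the first sum to $p(n-q)|\alpha|^{2}$. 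The same adjointness combined with Proposition~\ref{NormZBarZinserted} reduces the cross sum to $p|\alpha|^{2} - (p+q)(p+q-1)\,\bigl|\sum_{a}\iota_{\overline{Z_{a}}}\iota_{Z_{a}}\alpha\bigr|^{2}$. A short computation starting from $\omega = i\sum_{a}Z^{a}\wedge\overline{Z^{a}}$ and $\Lambda = L^{*}$ then identifies $\sum_{a}\iota_{\overline{Z_{a}}}\iota_{Z_{a}}\alpha = \tfrac{i}{(p+q)(p+q-1)}\Lambda\alpha$, yielding the pure-type formula
\begin{equation*}
|\alpha^{\bigodot^{2}V^{1,0}}|^{2} \;=\; \tfrac{p(n+1-q)}{2}|\alpha|^{2} \;-\; \tfrac{1}{2(p+q)(p+q-1)}|\Lambda\alpha|^{2}.
\end{equation*}

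Finally, for a real $\varphi = \alpha + \overline{\alpha}\in\Ext^{p,q}V^{*}\oplus\Ext^{q,p}V^{*}$ with $p\neq q$, the images $\Sigma_{\nu}\alpha\in\Ext^{p-1,q+1}V^{*}$ and $\Sigma_{\nu}\overline{\alpha}\in\Ext^{q-1,p+1}V^{*}$ lie in Hermitian-orthogonal subspaces, so the Hermitian norm splits additively as $|\alpha^{\bigodot^{2}V^{1,0}}|^{2}+|\overline{\alpha}^{\bigodot^{2}V^{1,0}}|^{2}$. Symmetrizing the pure-type formula in $(p,q)\leftrightarrow(q,p)$ and substituting $|\varphi|^{2} = 2|\alpha|^{2}$ together with $|\Lambda\varphi|^{2} = 2|\Lambda\alpha|^{2}$ (which follow because $g(\alpha,\alpha) = 0$ by type considerations) yields the claimed identity. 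The case $p = q$ requires no splitting and is precisely the pure-type formula at $q = p$; the primitive conclusion is then immediate once $\Lambda\varphi = 0$. The main obstacle I anticipate is the careful bookkeeping of the degree-dependent factors that appear in every tensor-norm adjoint, together with pinning down the precise normalization constant relating $\sum_{a}\iota_{\overline{Z_{a}}}\iota_{Z_{a}}$ to $\Lambda$; an error in either place would shift the coefficient of $|\Lambda\varphi|^{2}$ in the final expression.
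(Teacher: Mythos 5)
Your proposal is correct, and I checked that your intermediate constants are consistent: the pure-type formula $|\alpha^{\bigodot^{2}V^{1,0}}|^2=\tfrac{p(n+1-q)}{2}|\alpha|^2-\tfrac{1}{2k(k-1)}|\Lambda\alpha|^2$ (with $k=p+q$), symmetrized over $(p,q)\leftrightarrow(q,p)$ and combined with $|\varphi|^2=2|\alpha|^2$, $|\Lambda\varphi|^2=2|\Lambda\alpha|^2$ for $p\neq q$, reproduces exactly the coefficient $\tfrac14\left((p+q)(n+1)-2pq\right)$ of the lemma, and your normalization $\sum_a\iota_{\overline{Z_a}}\iota_{Z_a}=\tfrac{\sqrt{-1}}{k(k-1)}\Lambda$ matches the paper's formula $(\Lambda\varphi)(v_1,\ldots,v_{k-2})=-\sqrt{-1}\,k(k-1)\sum_a\varphi(Z_a,\overline{Z_a},v_1,\ldots,v_{k-2})$. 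The route differs from the paper's in organization rather than in essential ingredients: the paper computes $\sum_{a,b}g((Z_a\odot Z_b)\varphi,(\overline{Z_a}\odot\overline{Z_b})\varphi)$ directly for the real form by a component expansion over insertions of $Z_c,\overline{Z_c}$, then recognizes the $|\varphi|^2$, $\sum_{a,b}|\iota_{Z_a}\iota_{\overline{Z_b}}\varphi|^2$ (Proposition~\ref{NormZBarZinserted}) and $\Lambda$ contributions at the end; you instead encode the action as $(Z_a\otimes Z_b)\varphi=-\overline{Z^a}\wedge\iota_{Z_b}\varphi$ and run a wedge/contraction operator calculus (adjointness in the tensor norm, the counting identity $\sum_b Z^b\wedge\iota_{Z_b}=p\,\id$, and again Proposition~\ref{NormZBarZinserted}) on pure-type forms first, assembling the real case by bidegree orthogonality. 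What your version buys is the finer, $p$--$q$ asymmetric pure-type identity, valid for arbitrary complex $(p,q)$-forms, from which the real-form statement follows formally; what the paper's version buys is that realness is used once at the start (so a single computation covers both the $p=q$ and $p\neq q$ cases with no splitting or separate treatment), at the cost of heavier index bookkeeping. Your flagged risk points (adjoint degree factors and the $\Lambda$ normalization) are the right ones, and as stated you have them correct.
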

\begin{proof}
    Let $k=p+q$ and note that due to Remark~\ref{TraceInZCoordinates} we have 
    \begin{align*}
        g( (Z_a & \odot Z_b) \varphi, (\overline{Z_a} \odot \overline{Z_b}) \varphi) = \\
    = & \ \sum_{r, s} \sum_{i_1, \ldots, i_k} \varphi (e_{i_1}, \ldots, (Z_a \odot Z_b) e_{i_r}, \ldots, e_{i_k}) \varphi (e_{i_1}, \ldots, ( \overline{Z_a} \odot \overline{Z_b} ) e_{i_s}, \ldots, e_{i_k}) \\
    = & \ \sum_{r \neq s} \sum_{c,d} \sum_{|I|=k-2} \left\lbrace \varphi(e_{i_1}, \ldots, (Z_a \odot Z_b) Z_c, \ldots, Z_d, \ldots, e_{i_k}) \varphi (e_{i_1}, \ldots, \overline{Z_c}, \ldots, ( \overline{Z_a} \odot \overline{Z_b} ) \overline{Z_d}, \ldots, e_{i_k}) \right. \\
    & \hspace{2.8cm} + \varphi(e_{i_1}, \ldots, (Z_a \odot Z_b) Z_c, \ldots, \overline{Z_d}, \ldots, e_{i_k}) \varphi (e_{i_1}, \ldots, \overline{Z_c}, \ldots, ( \overline{Z_a} \odot \overline{Z_b} ) Z_d, \ldots, e_{i_k}) \\
    & \hspace{2.8cm} + \varphi(e_{i_1}, \ldots, (Z_a \odot Z_b) \overline{Z_c}, \ldots, Z_d, \ldots, e_{i_k}) \varphi (e_{i_1}, \ldots, Z_c, \ldots, ( \overline{Z_a} \odot \overline{Z_b} ) \overline{Z_d}, \ldots, e_{i_k}) \\
    & \hspace{2.8cm}  \left. + \varphi(e_{i_1}, \ldots, (Z_a \odot Z_b) \overline{Z_c}, \ldots, \overline{Z_d}, \ldots, e_{i_k}) \varphi (e_{i_1}, \ldots, Z_c, \ldots, ( \overline{Z_a} \odot \overline{Z_b} ) Z_d, \ldots, e_{i_k}) \right\rbrace \\
    & \ + \sum_{r} \sum_{c} \sum_{|I|=k-1} \left\lbrace \varphi (e_{i_1}, \ldots, (Z_a \odot Z_b) Z_c, \ldots, e_{i_k}) \varphi (e_{i_1}, \ldots, ( \overline{Z_a} \odot \overline{Z_b} ) \overline{Z_c}, \ldots, e_{i_k}) \right. \\
    & \hspace{3.2cm} \left. + \varphi (e_{i_1}, \ldots, (Z_a \odot Z_b) \overline{Z_c}, \ldots, e_{i_k}) \varphi (e_{i_1}, \ldots, ( \overline{Z_a} \odot \overline{Z_b} ) Z_c, \ldots, e_{i_k}) \right\rbrace \\
    = & \ \sum_{r \neq s} \sum_{c,d} \sum_{|I|=k-2}  \left( \delta_{ac} \varphi_{b \bar{d} i_3 \ldots i_k} + \delta_{bc} \varphi_{a \bar{d} i_3 \ldots i_k} \right) \left( \delta_{ad} \varphi_{c \bar{b} i_3 \ldots i_k} + \delta_{bd} \varphi_{c \bar{a} i_3 \ldots i_k} \right) \\
    & \ + \sum_r \sum_{c} \sum_{|I|=k-1} \left( \delta_{ac} \varphi_{b i_2 \ldots i_k} + \delta_{bc} \varphi_{a i_2 \ldots i_k} \right) \left( \delta_{ac} \varphi_{\bar{b} i_2 \ldots i_k} + \delta_{bc} \varphi_{\bar{a} i_2 \ldots i_k} \right)  \\
    = & \ \sum_{r \neq s} \sum_{|I|=k-2} 2 \left( \varphi_{a \bar{b} i_3 \ldots i_k} \varphi_{b \bar{a} i_3 \ldots i_k} +  \varphi_{a \bar{a} i_3 \ldots i_k} \varphi_{b \bar{b} i_3 \ldots i_k} \right) \\
    & \ + \sum_{r} \sum_{|I|=k-1} \left( \varphi_{a i_2 \ldots i_k} \varphi_{\bar{a} i_2 \ldots i_k}  + \varphi_{b i_2 \ldots i_k} \varphi_{\bar{b} i_2 \ldots i_k} + 2 \delta_{ab} \varphi_{a i_2 \ldots i_k} \varphi_{\bar{a} i_2 \ldots i_k} \right) \\
     = & \ k(k-1) \sum_{|I|=k-2} 2 \left( \varphi_{a \bar{b} i_3 \ldots i_k} \varphi_{b \bar{a} i_3 \ldots i_k} +  \varphi_{a \bar{a} i_3 \ldots i_k} \varphi_{b \bar{b} i_3 \ldots i_k} \right) \\
    & \ + k \sum_{|I|=k-1} \left( \varphi_{a i_2 \ldots i_k} \varphi_{\bar{a} i_2 \ldots i_k}  + \varphi_{b i_2 \ldots i_k} \varphi_{\bar{b} i_2 \ldots i_k} + 2 \delta_{ab} \varphi_{a i_2 \ldots i_k} \varphi_{\bar{a} i_2 \ldots i_k} \right).
    \end{align*}
Thus,
\begin{align*}
    4 \left| \varphi^{\bigodot^{2} V^{1,0}} \right|^2 = & \  \sum_{a,b} g( (Z_a \odot Z_b) \varphi, (\overline{Z_a} \odot \overline{Z_b}) \varphi) \\
    = & \ - 2 k(k-1) \sum_{a,b} \sum_{|I|=k-2} \left( \varphi_{a \bar{b} i_3 \ldots i_k} \varphi_{\bar{a} b i_3 \ldots i_k} +  \varphi_{a \bar{a} i_3 \ldots i_k} \varphi_{\bar{b} b i_3 \ldots i_k} \right) \\
    & \ + 2k(n+1) \sum_a \sum_{|I|=k-1} \varphi_{a i_2 \ldots i_k} \varphi_{\bar{a} i_2 \ldots i_k}.
\end{align*}
Note that the formal adjoint of the Lefschetz map satisfies 
\begin{align*}
    \left( \Lambda \varphi \right) (v_1, \ldots, v_{k-2}) = & \ k(k-1) \sum_{a} \varphi(e_a, e_{a+n}, v_1, \ldots, v_{k-2}) \\
    = & \ - \sqrt{-1} k(k-1) \sum_a \varphi(Z_a, \overline{Z_a}, v_1, \ldots, v_{k-2}).
\end{align*}
Together with Proposition~\ref{NormZBarZinserted} and Remark~\ref{TraceInZCoordinates}  this implies
\begin{align*}
    4 \left| \varphi^{\bigodot^{2} V^{1,0}} \right|^2 = & \  -2 pq | \varphi|^2 - \frac{2}{k(k-1)} | \Lambda \varphi|^2 + k(n+1) | \varphi |^2 \\
    = & \ (k(n+1)-2pq) | \varphi |^2 - \frac{2}{k(k-1)} | \Lambda \varphi|^2.
\end{align*}
\end{proof}

\begin{proposition}
\label{NormalFormHolomSym}
Let $(V,J,g)$ be a Euclidean vector space with a compatible complex structure and $S \in \bigodot^2 V^{1,0}$. There exists a unitary basis $Z_1, \ldots, Z_n$ for $V^{1,0}$ and constants $\rho_1, \ldots, \rho_n \geq 0$ such that 
\begin{align*}
    S = \sum_{a=1}^n \rho_a Z_a \otimes Z_a.
\end{align*}
\end{proposition}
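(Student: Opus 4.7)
The plan is to recast this as the classical Autonne--Takagi factorization for complex symmetric matrices. Choose any unitary basis $W_1, \ldots, W_n$ of $V^{1,0}$ and write $S = \sum_{a,b} S_{ab} W_a \otimes W_b$; since $S \in \bigodot^2 V^{1,0}$, the matrix $(S_{ab})$ is complex symmetric. A change of unitary basis $Z_c = \sum_a U_{ac} W_a$ with $U \in \mathsf{U}(n)$ transforms the coefficient matrix via $S \mapsto U^{*} S \overline{U}$, so the statement is equivalent to the assertion that every complex symmetric matrix $S$ admits a factorization $S = U \, \mathrm{diag}(\rho_1, \ldots, \rho_n) \, U^{T}$ with $U$ unitary and $\rho_a \geq 0$.

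To prove this factorization, I introduce the $\mathbf{C}$-antilinear operator $A \colon \mathbf{C}^n \to \mathbf{C}^n$ defined by $A(x) = S \overline{x}$. A direct check using $S^T = S$ shows that $A^2 = S \overline{S}$ is a Hermitian positive semidefinite $\mathbf{C}$-linear operator, so its eigenvalues are nonnegative reals $\rho_1^2 \geq \ldots \geq \rho_n^2$. For an eigenvector $v$ of $A^2$ with eigenvalue $\rho^2 > 0$, the vector $w := Av + \rho v$ (or, if this vanishes, $w := iv$) is nonzero and satisfies $Aw = \rho w$, i.e., $S \overline{w} = \rho w$. Setting $\widetilde{v} := \overline{w}/\|\overline{w}\|$ then yields a unit vector with $S \widetilde{v} = \rho \overline{\widetilde{v}}$. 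Extending $\widetilde{v}$ to a unitary basis and letting $V$ be the resulting unitary matrix, a short calculation shows that $V^T S V$ is block diagonal with entries $\rho$ and an $(n-1)\times(n-1)$ complex symmetric matrix $S'$; induction on $n$ then closes the argument, and translating back via $Z_c = \sum_a U_{ac} W_a$ produces the desired unitary basis of $V^{1,0}$ in which $S$ takes the claimed form.

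The main obstacle is the passage from an eigenvector of the linear operator $A^2 = S \overline{S}$ to an eigenvector of the antilinear operator $A$ itself, which is what yields $S v = \rho \overline{v}$ with nonnegative real $\rho$ rather than the weaker $\|S v\| = \rho \|v\|$ afforded by ordinary singular value decomposition. This is the step where the complex symmetry $S^T = S$ is indispensable: without it, even the squared operator $A^2 = S \overline{S}$ would fail to be Hermitian, and the scalar $\rho$ could not be chosen real and nonnegative, which is essential to land in the form $\sum_a \rho_a Z_a \otimes Z_a$ with $\rho_a \geq 0$.
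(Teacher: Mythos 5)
Your proof is correct and follows essentially the same route as the paper: both come down to the Autonne--Takagi diagonalization of the complex-symmetric form $S$ in a unitary basis, with a phase adjustment (your choice of $w = Av + \rho v$, or $iv$; the paper's replacement $Z_a \mapsto \sqrt{-1}\,Z_a$) securing $\rho_a \geq 0$. The only difference is one of detail: the paper invokes the diagonalizability of the associated operator $\hat S$ outright, whereas you prove the factorization from scratch via the antilinear operator $A(x) = S\overline{x}$, the Hermitian positive semidefinite square $A^2 = S\overline{S}$, and induction (noting only that the trivial case $S\overline{S}=0$, which forces $S=0$, is left implicit).
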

\begin{proof}
Note that via $X \odot Y \mapsto g( \text{}  \cdot \text{},  \overline{X}) Y + g( \text{}  \cdot \text{}, \overline{Y})X$ we can assign to $S \in \bigodot^2 V^{1,0}$ a complex linear operator $\hat{S} : V^{1,0} \to V^{1,0}$. By construction, $\hat{S}$ is symmetric with respect to the Hermitian inner product $g( \cdot, \overline{\text{} \cdot \text{}})$ on $V^{1,0}$. It follows that there is a unitary basis $Z_1, \ldots, Z_n$ for $V^{1,0}$ and $\rho_1, \ldots, \rho_n \in \mathbf{R}$ such that $\hat{S}(Z_a) = \rho Z_a.$ It follows that $S = \sum_a \rho_a Z_a \otimes Z_a$ and by replacing $Z_a$ with $\sqrt{-1}Z_a$ we may assume $\rho_a \geq 0.$
\end{proof}

\begin{proposition}
\label{ActionOnPQforms}
The action of $\bigotimes^2 V^{1,0}$ on $\Ext^{p,q} V^{*}$ is given by
\begin{align*}
    (Z_a \otimes Z_b) & (Z^{i_1} \otimes \ldots \otimes Z^{i_p} \otimes \overline{Z}^{j_1} \otimes \ldots \otimes \overline{Z}^{j_q})  = \\
    = & \ - \sum_{k=1}^p \delta_{b i_k} Z^{i_1} \otimes \ldots \otimes \overline{Z^a} \otimes \ldots \otimes Z^{i_p} \otimes \overline{Z}^{j_1} \otimes \ldots \otimes \overline{Z}^{j_q}.
\end{align*}
\end{proposition}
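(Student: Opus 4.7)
The plan is to unpack the derivation action of $L := Z_a \otimes Z_b \in \End(V^{\mathbf{C}})$ on the given rank-$(p+q)$ covariant tensor. By the identification $V \otimes V \simeq \End(V)$ from Section~2, $L$ acts on vectors via $L\xi = g(Z_a, \xi) Z_b$, and on covariant tensors as a derivation via \eqref{DefinitionActionOnTensors}.

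Rather than substituting into the full tensor at once, I would first compute $L$ on each basis covector and then invoke the Leibniz rule. For a holomorphic covector $Z^i = g(\cdot, \overline{Z_i})$, one has
\begin{align*}
(LZ^i)(\xi) = -Z^i(L\xi) = -g(Z_a,\xi)\, g(Z_b, \overline{Z_i}) = -\delta_{bi}\, \overline{Z^a}(\xi),
\end{align*}
using $g(Z_b, \overline{Z_i}) = \delta_{bi}$ and the identity $\overline{Z^a} = g(Z_a, \cdot)$ from the preliminaries. For an antiholomorphic covector $\overline{Z}^j = g(Z_j, \cdot)$,
\begin{align*}
(L\overline{Z}^j)(\xi) = -g(Z_j, L\xi) = -g(Z_j, Z_b)\, g(Z_a, \xi) = 0,
\end{align*}
because $V^{1,0}$ is $g$-isotropic, i.e.\ $g(Z_j, Z_b) = 0$.

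Now applying the Leibniz rule to $L$ acting on $Z^{i_1} \otimes \ldots \otimes Z^{i_p} \otimes \overline{Z}^{j_1} \otimes \ldots \otimes \overline{Z}^{j_q}$ yields a sum over all $p+q$ slots. By the second computation above, the $q$ antiholomorphic slots all vanish, and by the first computation, each holomorphic slot $k \in \{1, \ldots, p\}$ contributes a factor of $-\delta_{bi_k}$ and replaces $Z^{i_k}$ by $\overline{Z^a}$. This gives precisely the asserted formula.

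No step presents a genuine obstacle; the result is a direct bookkeeping exercise using the pairing rules $g(Z_a, Z_b) = 0$ and $g(Z_a, \overline{Z_b}) = \delta_{ab}$. The only conceptual point worth emphasizing is that the vanishing $(L\overline{Z}^j) = 0$ is precisely the $g$-isotropy of $V^{1,0}$, which is the mechanism behind the type-shifting behavior of the $\bigodot^2 V^{1,0}$-action on $(p,q)$-forms noted in the introduction.
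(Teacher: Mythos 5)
Your proposal is correct and follows essentially the same route as the paper: compute $(Z_a\otimes Z_b)Z^c=-\delta_{bc}\overline{Z^a}$ and $(Z_a\otimes Z_b)\overline{Z^c}=0$ directly from the derivation rule \eqref{DefinitionActionOnTensors} and the pairing identities, then extend by the Leibniz rule over the tensor slots. The paper's proof is exactly these two one-covector computations, with the derivation extension left implicit, so there is nothing to add.
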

\begin{proof}
Note that
    \begin{align*}
            (Z_a \otimes Z_b) Z^c & = - Z^c( (Z_a \otimes Z_b) \cdot ) = - g(Z_a, \cdot ) Z^c(Z_b) = - \delta_{bc} \overline{Z^a}, \\
    (Z_a \otimes Z_b) \overline{Z^c} & = - \overline{Z^c}( (Z_a \otimes Z_b) \cdot ) = - g(Z_a, \cdot ) \overline{Z^c}(Z_b) = 0
    \end{align*}
    follow from the definition of the action on tensors in \eqref{DefinitionActionOnTensors}.
\end{proof}

\begin{remark}
\label{NotationForForms}
    \normalfont
    For two multi-indices $I = (i_1, \ldots,  i_p)$ and $J = (j_1, \ldots, j_q)$ we define $K$ to be the multi-index with $p+q$ entries, consisting of the indices of $I$ and $\overline{J}.$ The entries of $K$ are ordered lexicographically and if $i_k = j_l,$ then $K = ( \ldots, i_k, \bar{j}_l, \ldots ).$ 

    Conversely, given a multi-index $K$ as above, we recover $I_K$ and $J_K$ so that $I_K$ and $\overline{J_K}$ yield $K.$

    Furthermore, given $K$ we denote by $Z^K$ the corresponding $(p,q)$-form with $| Z^K |^2 = 1$. Note that $\varphi= Z^1 \wedge \ldots \wedge Z^p \wedge \overline{Z^1} \wedge \ldots \wedge \overline{Z^q}$ satisfies $| \varphi |^2 = (p+q)!.$ By convention, if $K = \emptyset$, we set $Z^K = 0$. 
    
    For example, if $I=(2,3)$ and $J=(1,2),$ then $K=(\bar{1}, 2, \bar{2}, 3)$ and $Z^K= \frac{1}{\sqrt{24}} \ \overline{Z^1} \wedge Z^2 \wedge \overline{Z^2} \wedge Z^3.$ 

    Since $Z^K=0$ if any index appears twice in $I$ or $J$, we may assume throughout that this is not the case.
\end{remark}

\begin{proposition}
\label{MainEstimateForForms}
If $\psi \in \Ext^{p,q} V^{*} \oplus \Ext^{q,p} V^{*}$ is real, then
\[
|S\psi|^2 \leq \left(\frac{1}{2} + \min\left( p,q,\frac{\sqrt{pq}}{2} \right)\right)|S|^2|\psi|^2
\]
for all $S \in \bigodot^2 V^{1,0}$.  In particular, if $\psi$ is primitive, then
\[
|S\psi|^2 \leq \frac{2 + 4\min\left(p,q,\frac{\sqrt{pq}}{2} \right)}{(p+q)(n+1) - 2pq}|S|^2 |\psi^{\bigodot^{2} V^{1,0}}|^2
\]
for all $S \in \bigodot^2 V^{1,0}$.
\end{proposition}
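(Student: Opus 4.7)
My plan is to diagonalize $S$ via Proposition~\ref{NormalFormHolomSym}, rewrite $S\psi$ as a sum of contractions-and-wedges, and then bound $|S\psi|^2$ by three distinct Cauchy--Schwarz estimates whose minimum produces the three-term constant. The primitive bound then follows immediately via Lemma~\ref{NormHolomSymHatForms}.

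By Proposition~\ref{NormalFormHolomSym}, we may choose a unitary basis $\{Z_a\}$ of $V^{1,0}$ in which $S = \sum_{a=1}^n \rho_a\, Z_a \otimes Z_a$ with $\rho_a \geq 0$, giving $|S|^2 = \sum_a \rho_a^2$. From Proposition~\ref{ActionOnPQforms}, the derivation $Z_a \otimes Z_a$ acts on forms by replacing each factor of $Z^a$ by $-\overline{Z^a}$, and collecting these contributions yields the clean formula
\[
S\psi \; = \; \sum_{a=1}^n \rho_a\, (\iota_{Z_a}\psi) \wedge \overline{Z^a}.
\]
Using $\overline{\psi} = \psi$, the squared norm becomes
\[
|S\psi|^2 \; = \; \sum_{a, b = 1}^n \rho_a \rho_b\, g\bigl( (\iota_{Z_a}\psi) \wedge \overline{Z^a},\; (\iota_{\overline{Z_b}}\psi) \wedge Z^b \bigr),
\]
and each inner product splits, via the standard adjoint identity relating wedge and interior product, into a diagonal part (carrying a $\delta_{ab}$) and an off-diagonal part in which the wedge factors contract through additional interior products.

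The diagonal piece reduces to $\sum_a \rho_a^2 |\iota_{Z_a}\psi|^2 \leq |S|^2 \cdot \tfrac{1}{2}|\psi|^2$, using that $\sum_a |\iota_{Z_a}\psi|^2 = \tfrac{1}{2}|\psi|^2$ for real $\psi \in \Ext^{p,q}V^{*} \oplus \Ext^{q,p}V^{*}$ (a claim provable by an argument analogous to Proposition~\ref{NormZBarZinserted}), and so contributes the additive $\tfrac{1}{2}|S|^2|\psi|^2$. The off-diagonal piece I would bound in three complementary ways. First, exploiting the $p$ holomorphic slots available to $S$ in the $\Ext^{p,q}$-component of $\psi$ yields a Cauchy--Schwarz bound with constant $p$. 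Second, the conjugate estimate on the $\Ext^{q,p}$-component, where $\overline{S}$ acts on the $q$ holomorphic slots of $\overline{\varphi}$, gives constant $q$. Third, an AM--GM-type interpolation between these two estimates with equal weights produces the geometric mean $\sqrt{pq}/2$. Taking the minimum of the three gives $\min(p, q, \tfrac{\sqrt{pq}}{2})$, and combining with the diagonal contribution yields the claimed constant $\tfrac{1}{2} + \min(p, q, \tfrac{\sqrt{pq}}{2})$.

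For the primitive case, Lemma~\ref{NormHolomSymHatForms} gives $|\psi|^2 = \tfrac{4|\psi^{\bigodot^{2} V^{1,0}}|^2}{(p+q)(n+1) - 2pq}$, and substituting this into the first inequality immediately produces the second with constant $\tfrac{2 + 4\min(p,q,\sqrt{pq}/2)}{(p+q)(n+1) - 2pq}$. The main obstacle I anticipate is executing the three off-diagonal Cauchy--Schwarz estimates cleanly: tracking the signs produced by the anticommuting interior products, and choosing the precise weighting that yields the $\sqrt{pq}/2$ interpolation. The case $p = q$ is especially delicate, because $S\varphi$ and $S\overline{\varphi}$ then lie in the same bidegree and interfere nontrivially, so one cannot separately bound the pieces of a decomposition $\psi = \varphi + \overline{\varphi}$ and must instead work directly with the real form.
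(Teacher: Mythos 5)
Your overall skeleton for the first two constants is close to the paper's: the paper also diagonalizes $S$ via Proposition~\ref{NormalFormHolomSym} and runs slot-counting Cauchy--Schwarz arguments, and your reduction of the primitive case to Lemma~\ref{NormHolomSymHatForms} is exactly right. But there are two problems with how you assemble the constant, one fixable and one fatal. The fixable one: your bounds with constants $p$ and $q$ apply to the $\Ext^{p,q}$- and $\Ext^{q,p}$-components \emph{separately}, so they add rather than admit a minimum; since each component carries half of $|\psi|^2$, this yields $\tfrac12+\tfrac{p+q}{2}$, not $\tfrac12+\min(p,q)$. To get the minimum the paper proves \emph{two} different bounds for the same component --- constant $p$ by counting source slots, and constant $q+1$ by relabeling to the target $(p-1,q+1)$-indices and using $\operatorname{tr}(A^2)\le\sum_{a,b}a_{ab}^2$ on the matrix $(\lambda_a|\varphi_{ab}|)$ --- and then mixes these across $\varphi$ and $\overline\varphi$.

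The fatal gap is the third constant $\tfrac{\sqrt{pq}}{2}$. No ``AM--GM-type interpolation with equal weights'' of the first two estimates can produce it: any consequence of two valid upper bounds whose constants are at least $\min(p,q)$ is again a bound with constant at least $\min(p,q)$, whereas $\tfrac{\sqrt{pq}}{2}<\min(p,q)$ whenever $\max(p,q)<4\min(p,q)$ --- in particular for $p=q$, precisely the case you flag as delicate, and precisely the case ($p=q=1$) that makes $\Upsilon_{1,1}=\tfrac n2$ and hence the sharp constant of Theorem~\ref{thm:main-theorem-1}. (Also note AM--GM runs the wrong way: it gives $\tfrac{p+q}{2}\ge\sqrt{pq}$, and the geometric mean of $p$ and $q$ is $\sqrt{pq}$, not $\tfrac{\sqrt{pq}}{2}$.) The paper's third estimate is a genuinely new argument, variational and carried out directly on the real form: maximize $|S\psi|^2$ subject to $|S|^2=|\psi|^2=1$; at a maximizer the Lagrange condition $\overline S(S\psi)=\alpha\psi$ with $\alpha=|S\psi|^2$ gives, componentwise in the $Z^K$-expansion, $2\alpha\,\psi_K=\sum_{a}\lambda_a^2\,\psi_K+2\sum_{a,b}\lambda_a\lambda_b\,\psi_{K_{a\mapsto\bar a,\bar b\mapsto b}}$ with $a\in I_K\setminus J_K$, $b\in J_K\setminus I_K$; evaluating at a $K$ with $|\psi_K|$ maximal and then optimizing the $\lambda_a$ over $I\setminus J$ and $J\setminus I$ yields $\alpha\le\tfrac12+\tfrac{\sqrt{|I\setminus J|\,|J\setminus I|}}{2}\le\tfrac12+\tfrac{\sqrt{pq}}{2}$. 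The gain comes from the structure of the cross terms $\psi_K\overline{\psi_{K_{a\mapsto\bar a,\bar b\mapsto b}}}$, i.e.\ from treating the real form as a whole rather than combining estimates on $S\varphi$ and $S\overline\varphi$; this mechanism is what your proposal anticipates but does not supply, and without it the proposition (and the $\tfrac n2$ threshold downstream) is not proved.
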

\begin{proof}
    The result is a combination of three estimates. To start, due to Proposition~\ref{NormalFormHolomSym}, we may assume that $S = \sum_a \lambda_a Z_a \otimes Z_a$ with $\lambda_a \geq 0.$ With the notation introduced in Remark~\ref{NotationForForms}, consider a $(p,q)$-form $\varphi = \sum_K \varphi_K Z^K.$ Proposition~\ref{ActionOnPQforms} shows that
\begin{align*}
    S \varphi = - \sum_a \sum_K \lambda_a \varphi_K Z^{K_{a \mapsto \bar{a}}},
\end{align*}
where $K_{a \mapsto \bar{a}}$ is constructed from $K$ by replacing $a$ with $\bar{a}$ if $a \in I_K$ and $K_{a \mapsto \bar{a}}=\emptyset$ otherwise. Note that this turns a $(p,q)$-form into a $(p-1,q+1)$-form.

Cauchy-Schwarz, together with the fact that there are at most $p$ possibilities for $a,$ yields \begin{align}
\label{estimate1}
\left| S\varphi \right|^2 \leq  \left( \sum_a \lambda_a^2 \right)   \left(\sum_a\left| \sum_{K} \varphi_{K} Z^{K_{a \mapsto \bar{a}}} \right|^2 \right) \leq | S |^2 \ p \sum_K | \varphi_K |^2 =  p | S |^2 | \varphi |^2.
\end{align}
Note that $Z^{K}=0$ if an index appears in $I_K$ or $J_K$ twice.

For the second estimate, we relabel indices giving
\begin{align*}
S \varphi &= - \sum_a \sum_K \lambda_a \varphi_K Z^{K_{a \mapsto \bar{a}}}  = - \sum_{L} \left( \sum_{b \in J_L \setminus I_L} \lambda_b \varphi_{L_{\bar{b} \mapsto b}} \right) Z^{L} = - \sum_{L} \left( \sum_{b} \lambda_b \varphi_{L_{\bar{b} \mapsto b}} \right) Z^{L} \\
&= - \sum_{L} \sum_{a} \lambda_a \varphi_{L_{\bar{a} \mapsto a}} Z^{L},
\end{align*}
where $I_L$, $J_L$ are the index sets so that $I_L$ and $\overline{J_L}$ yield $L$ as in Remark~\ref{NotationForForms}.

Consequently, we get 
 \begin{align*}
 \left| S\varphi \right|^2 \ &= \sum_{L} \left| \sum_{b} \lambda_b \varphi_{L_{\bar{b} \mapsto b}} \right|^2
 =\sum_L \sum_{a,b}\lambda_a\lambda_b\varphi_{L_{\bar{a} \mapsto a}}\overline{\varphi_{L_{\bar{b} \mapsto b}}}\\
 &=\sum_K \sum_{a,b}\lambda_a\lambda_b\varphi_{K}\overline{\varphi_{K_{a \mapsto \bar{a},\bar{b} \mapsto b}}}
 =\sum_{a,b}\lambda_a\lambda_b \sum_K \varphi_{K}\overline{\varphi_{K_{a \mapsto \bar{a},\bar{b} \mapsto b}}},
 \end{align*}
where $K_{a \mapsto \bar{a},\bar{b} \mapsto b}$ denotes $K$  if $a=b\in I_K\setminus J_K$  and otherwise consists of the simultaneous replacements $a \mapsto \bar{a}$ and $\bar{b} \mapsto b$.
For two indices $a,b$, define 
\begin{align*}
\varphi_{ab} = \sum\limits_{K:a\in I, b\in J}\varphi_{K}Z^{K} \ \text{ and } \ \omega_{ab} = \sum\limits_{K:a\in I, b\in J}\varphi_{K_{a \mapsto \bar{a},\bar{b} \mapsto b}}Z^{K}
\end{align*}
and note that

\begin{align*}
    \varphi_{ba} &= \sum\limits_{K:b\in I, a\in J}\varphi_{K}Z^K
    =\sum\limits_{K:a\in I, b\in J}\varphi_{K_{a \mapsto \bar{a},\bar{b} \mapsto b}}Z^{K_{a \mapsto \bar{a},\bar{b} \mapsto b}}
\end{align*}
implies $|\omega_{ab}|=|\varphi_{ba}|$ and hence
\begin{align*}
    \sum_K \varphi_{K}\overline{\varphi_{K_{a \mapsto \bar{a},\bar{b} \mapsto b}}} &= g( \varphi_{ab}, \overline{\omega_{ab}}) \le |\varphi_{ab}||\omega_{ab}| = |\varphi_{ab}||\varphi_{ba}|.
\end{align*}

Using the inequality $\tr(A^2)\leq ||A||_2^2=\sum_{i,j} a_{ij}^2$ on the matrix $(\lambda_a |\varphi_{ab}|)_{ab}$, we obtain

\begin{align}
    \left| S\varphi \right|^2 \ &= \sum_{a,b}\lambda_a\lambda_b \sum_K \varphi_{K}\overline{\varphi_{K_{a \mapsto \bar{a},\bar{b} \mapsto b}}} \nonumber\\ 
    &\le \sum_{a,b}\lambda_a \lambda_b|\varphi_{ab}||\varphi_{ba}| \nonumber\\
    &\le \sum_{a,b}\lambda_a^2|\varphi_{ab}|^2 \nonumber\\
    &\le \sum_{a}\lambda_a^2 (q+1)|\varphi|^2 \nonumber\\
    &= (q+1)|S|^2|\varphi|^2, \label{estimate2}
\end{align}
noting that for a fixed $a$, a given $\varphi_K$ appears in no more than $(q+1)$ many $\varphi_{ab}$.

Applying the estimates \eqref{estimate1} and \eqref{estimate2} to a real form $\psi = \varphi + \overline{\varphi} \in \Ext^{p,q}V^* \oplus \Ext^{q,p}V^*$, we obtain
\begin{align*}
 \left| S  \psi \right|^2 = \left|S \varphi\right|^2 + \left|S \overline{\varphi} \right|^2 \leq  \left( \frac{1}{2} + \min(p,q) \right) | S |^2 | \psi |^2.
\end{align*}

For the third estimate, note that the real form $\psi$ satisfies
\begin{align}
\left| S\psi \right|^2 = & \ \sum_{L} \left| \sum_{b \in J_L \setminus I_L} \lambda_b \psi_{L_{\bar{b} \mapsto b}} \right|^2\ = \  \sum_{L} \sum_{a,b \in J_L \setminus I_L} \lambda_a \lambda_b \ \psi_{L_{\bar{a} \mapsto a}} \overline{\psi_{L_{\bar{b} \mapsto b}}} \nonumber\\ 
=& \sum_{K} \sum_{a\in I_K\setminus J_K}\sum_{b \in (J_K \setminus I_K)\cup \{a\}} \lambda_a \lambda_b \ \psi_K \overline{\psi_{K_{a \mapsto \bar{a}, \bar{b} \mapsto b}}}\nonumber \\
= & \ \sum_K \sum_{a \in I_K \setminus J_K} \lambda_a^2 | \psi_K |^2 + \sum_{K} \sum_{\substack{a \in I_K \setminus J_K \\ b \in J_K \setminus I_K}} \lambda_a \lambda_b \ \psi_K \overline{\psi_{K_{a \mapsto \bar{a}, \bar{b} \mapsto b}}}\nonumber
\\= & \ \sum_{K\in (p,q)} \left(\sum_{a \in I_K \setminus J_K} \lambda_a^2 | \psi_K |^2 +  \sum_{\substack{a \in I_K \setminus J_K \\ b \in J_K \setminus I_K}} \lambda_a \lambda_b \ \psi_K \overline{\psi_{K_{a \mapsto \bar{a}, \bar{b} \mapsto b}}}\right) \nonumber \\
& + \ \sum_{K\in(q,p)} \left( \sum_{a \in I_K \setminus J_K} \lambda_a^2 | \psi_K |^2 + \sum_{\substack{a \in I_K \setminus J_K \\ b \in J_K \setminus I_K}} \lambda_a \lambda_b \ \psi_K \overline{\psi_{K_{a \mapsto \bar{a}, \bar{b} \mapsto b}}}\right)\nonumber\\
= & \ \sum_{K\in (p,q)} \Bigg( \sum_{a \in I_K \setminus J_K} \lambda_a^2 | \psi_K |^2 +  \sum_{\substack{a \in I_K \setminus J_K \\ b \in J_K \setminus I_K}} \lambda_a \lambda_b \ \psi_K \overline{\psi_{K_{a \mapsto \bar{a}, \bar{b} \mapsto b}}} \nonumber\\
& \hspace{18mm} + \sum_{a \in J_K \setminus I_K} \lambda_a^2 | \psi_{\overline{K}} |^2 + \sum_{\substack{a \in J_K \setminus I_K \\ b \in I_K \setminus J_K}} \lambda_a \lambda_b \ \psi_{\overline{K}} \overline{\psi_{\overline{K}_{a \mapsto \bar{a}, \bar{b} \mapsto b}}}\Bigg)
\nonumber\\
= & \ \sum_{K\in (p,q)} \Bigg( \sum_{a \in I_K \setminus J_K \cup J_K \setminus I_J} \lambda_a^2 | \psi_K |^2 +  \sum_{\substack{a \in I_K \setminus J_K \\ b \in J_K \setminus I_K}} 2 \lambda_a \lambda_b \  \operatorname{Re}(\psi_K \overline{\psi_{K_{a \mapsto \bar{a}, \bar{b} \mapsto b}}})\Bigg), \label{longformula}
\end{align}
where $K \in (p,q)$ means $K$ is an index set corresponding to a $(p,q)$-form. 
We wish to maximize $| S\psi |^2$ subject to the constraints $| S |^2 =1$ and $| \psi |^2= 1$.

The constraint $|\psi|^2 = 1$ implies there exists $\alpha \in \R$ such that, at the critical point $\psi$,
 \begin{align*}
      2\alpha \psi = \operatorname{grad}\left(\phi \mapsto g(S\phi,\overline{ S \phi})\right) = 2 \overline{S} (S \psi). 
 \end{align*}
Hence, $\alpha = g( \overline{S}(S\psi),\overline{\psi}) = |S\psi|^2$. Since we are at a critical point, this lets us read off the gradient from \eqref{longformula}, and we get for each $K$ that

\begin{align*}
2 \alpha \psi_K =  \sum_{a \in I_K \setminus J_K \cup J_K \setminus I_J} \lambda_a^2 \psi_K  +  \sum_{\substack{a \in I_K \setminus J_K \\ b \in J_K \setminus I_K}} 2 \lambda_a \lambda_b \ \psi_{K_{a \mapsto \bar{a}, \bar{b} \mapsto b}}.
\end{align*}

Looking at the $K$ such that $|\psi_K|$ is maximal, we get
\begin{align*}
2\alpha |\psi_K| \leq \left( \sum_{a \in I_K \setminus J_K \cup J_K \setminus I_K} \lambda_a^2 +  \sum_{\substack{a \in I_K \setminus J_K \\ b \in J_K \setminus I_K}} 2 \lambda_a \lambda_b \ \right) |\psi_K|
\end{align*}
and hence
\begin{align*}
    2\alpha \leq \sum_{a \in I_K \setminus J_K \cup J_K \setminus I_K} \lambda_a^2 +  \sum_{\substack{a \in I_K \setminus J_K \\ b \in J_K \setminus I_K}} 2 \lambda_a \lambda_b.
\end{align*}

The right hand side is maximized when 
\begin{align*}
    \lambda_i = \begin{cases}
        \frac{1}{\sqrt{2| I \setminus J |}} & \text{ if } i\in I\setminus J, \\
        \frac{1}{\sqrt{2| J \setminus I |}} & \text{ if } i\in J\setminus I, \\
        0 & \text{ otherwise.}
    \end{cases}
\end{align*}

It follows that 
\begin{align*}
\alpha  \leq   \frac{1}{2} + \frac{\sqrt{| I \setminus J |\cdot| J \setminus I |}}{2} \le \frac{1}{2} + \frac{\sqrt{pq}}{2},
\end{align*}
which yields the claim when combined with the previous two estimates.
\end{proof}

\begin{remark}
    \normalfont
    For given $(p,q)$, the form $\operatorname{Re}(\sum_K Z^K)$, where $K$ runs over all $(p,q)$-index sets such that $I_K \cup J_K = \{1, \ldots, p+q\}$, satisfies $| S \psi |^2 = \left( \frac{1}{2} + \frac{pq}{p+q} \right) | S |^2 | \psi |^2$ for $S=\sum_{a=1}^{p+q} Z_a \otimes Z_a$, which we suspect to be the actual optimal constant. On $(p,0)$, $(0,q)$, and $(p,p)$-forms, this agrees with the estimate in Proposition~\ref{MainEstimateForForms}, which is therefore sharp in these degrees.
\end{remark}

\begin{proposition}\label{WeightPrinciple}
Suppose that $\mathcal{C} : \bigodot^2 V^{1,0} \to \bigodot^2 V^{1,0}$ is an algebraic Calabi curvature operator and let $\kappa \leq 0.$ If the eigenvalues $\sigma_1 \leq \ldots \leq \sigma_{\binom{n+1}{2}}$ of $\mathcal{C}$ satisfy
\begin{align*}
    \label{eqn:lower-bound}
\sigma_1 + \ldots + \sigma_{\lfloor \Upsilon \rfloor} + \left( \Upsilon - \lfloor \Upsilon \rfloor  \right) \sigma_{\lfloor \Upsilon\rfloor +1}  \geq  \kappa \Upsilon,
\end{align*}
then for any primitive real form $\varphi \in \Ext^{p,q}(V^{\ast}) \oplus \Ext^{q,p}(V^{\ast})$, we have
\begin{align*}
 g(\Ric_L(\varphi), \varphi) = 2 \sum_{a} \sigma_a | \Sigma_a \varphi |^2 \geq 2 \kappa | \varphi^{\bigodot^{2} V^{1,0}} |^2.
\end{align*}
If $\mathcal{C}$ is $\Upsilon$-positive, then $g(\Ric_L(\varphi), \varphi) \geq 0$ and equality holds if and only if $\varphi =0$.
\end{proposition}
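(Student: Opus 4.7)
The plan is to combine the three key technical inputs developed above (Lemma~\ref{CurvatureTermWithCalabi}, Lemma~\ref{NormHolomSymHatForms}, and Proposition~\ref{MainEstimateForForms}) into a standard weighted-eigenvalue optimization argument, of the type used in \cite{PW1,PW2,NPWBettiNumbersAndCOSK}.

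First I would set $a_\nu := |\Sigma_\nu \varphi|^2 \geq 0$ and $A := \sum_\nu a_\nu$. By Lemma~\ref{CurvatureTermWithCalabi} the quantity to be estimated is $2\sum_\nu \sigma_\nu a_\nu$, and by definition of $\varphi^{\bigodot^2 V^{1,0}}$ one has $A = |\varphi^{\bigodot^2 V^{1,0}}|^2$. Since the $\Sigma_\nu$ form a unitary basis, each $\Sigma_\nu$ satisfies $|\Sigma_\nu|^2 = 1$, so Proposition~\ref{MainEstimateForForms}, applied to the primitive real form $\varphi$, gives
\begin{equation*}
a_\nu \ = \ |\Sigma_\nu \varphi|^2 \ \leq \ \frac{1}{\Upsilon_{p,q}}\, |\varphi^{\bigodot^2 V^{1,0}}|^2 \ = \ \frac{A}{\Upsilon_{p,q}}.
\end{equation*}
Thus the normalized weights $b_\nu := \Upsilon_{p,q}\, a_\nu / A$ (assuming $A > 0$) satisfy $0 \leq b_\nu \leq 1$ and $\sum_\nu b_\nu = \Upsilon_{p,q}$.

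The next step is the weighted-eigenvalue lemma: among all such $(b_\nu)$ the minimum of $\sum_\nu \sigma_\nu b_\nu$ is achieved by the greedy assignment that places full weight on the lowest eigenvalues, i.e., $b_1 = \ldots = b_{\lfloor \Upsilon \rfloor} = 1$, $b_{\lfloor \Upsilon \rfloor + 1} = \Upsilon - \lfloor \Upsilon \rfloor$, and $b_\nu = 0$ otherwise. This is a routine rearrangement argument (any mass sitting above a lower eigenvalue with room to grow can be transferred downward without increasing the sum). Combining this with the hypothesis on the lowest eigenvalues then yields
\begin{equation*}
\sum_\nu \sigma_\nu b_\nu \ \geq \ \sigma_1 + \ldots + \sigma_{\lfloor \Upsilon \rfloor} + (\Upsilon - \lfloor \Upsilon \rfloor)\sigma_{\lfloor \Upsilon \rfloor + 1} \ \geq \ \kappa \Upsilon,
\end{equation*}
and after multiplying by $A/\Upsilon$ and by the factor $2$ from Lemma~\ref{CurvatureTermWithCalabi}, the inequality $g(\Ric_L(\varphi),\varphi) \geq 2\kappa |\varphi^{\bigodot^2 V^{1,0}}|^2$ falls out. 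The case $A = 0$ is trivial (both sides vanish, and by Lemma~\ref{NormHolomSymHatForms} this in fact forces $\varphi = 0$ whenever $(p+q)(n+1) - 2pq > 0$).

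For the rigidity statement under $\Upsilon$-positivity (so $\kappa = 0$ and the hypothesis is strict), I would argue by contrapositive: if $\varphi \neq 0$ then Lemma~\ref{NormHolomSymHatForms} forces $A = |\varphi^{\bigodot^2 V^{1,0}}|^2 > 0$ since $(p+q)(n+1) - 2pq > 0$ for all admissible $(p,q)$ with $p+q \geq 1$. Then the strict version of the weight inequality $\sigma_1 + \ldots + \sigma_{\lfloor \Upsilon \rfloor} + (\Upsilon - \lfloor \Upsilon \rfloor)\sigma_{\lfloor \Upsilon \rfloor + 1} > 0$ propagates through the chain above to give $g(\Ric_L(\varphi),\varphi) > 0$. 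I do not anticipate a serious obstacle here; all the hard work has already been absorbed into Proposition~\ref{MainEstimateForForms} (where the exact constant $\Upsilon_{p,q}$ was identified) and Lemma~\ref{CurvatureTermWithCalabi} (where the curvature term was rewritten in the Calabi eigenbasis). The only mild subtlety is to verify that $\Upsilon_{p,q}$ in the hypothesis matches precisely the reciprocal constant coming out of Proposition~\ref{MainEstimateForForms}, which is immediate from the definition.
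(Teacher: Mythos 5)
Your proposal is correct and follows essentially the same route as the paper: the identity from Lemma~\ref{CurvatureTermWithCalabi}, the per-eigenvector bound $|\Sigma_\nu\varphi|^2 \leq \frac{1}{\Upsilon}|\varphi^{\bigodot^2 V^{1,0}}|^2$ from Proposition~\ref{MainEstimateForForms} (with $|\Sigma_\nu|^2=1$), and the greedy/rearrangement weight principle of \cite[Lemma 2.1]{PW1}, which the paper simply writes out as an explicit chain of inequalities rather than via your normalized weights $b_\nu$. Your treatment of the positivity case via Lemma~\ref{NormHolomSymHatForms} (using $(p+q)(n+1)-2pq>0$) likewise matches the paper's argument.
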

\begin{proof}
The first part is a consequence of Lemma~\ref{CurvatureTermWithCalabi} and the proof of \cite[Lemma 2.1]{PW1}, see also \cite[Lemma 3.4]{NPWBettiNumbersAndCOSK}.
Note that Proposition~\ref{MainEstimateForForms} asserts that $| S \varphi |^2 \leq \frac{1}{\Upsilon} | S |^2 | \varphi^{\bigodot^{2} V^{1,0}} |^2$ for all $S \in \bigodot^2 V^{1,0}$. We note that 
\begin{align*}
\sum_{a=1}^{\binom{n+1}{2}} \sigma_a | \Sigma_a \varphi |^2 
 \geq & \ \sigma_{\lfloor \Upsilon \rfloor +1} \sum_{a = \lfloor \Upsilon \rfloor +1}^{\binom{n+1}{2}} | \Sigma_a \varphi |^2 + \sum_{a=1}^{\lfloor \Upsilon \rfloor } \sigma_a | \Sigma_a \varphi |^2 \\
= & \ \sigma_{\lfloor \Upsilon \rfloor +1}  | \varphi^{\bigodot^{2} V^{1,0}} |^2 + \sum_{a=1}^{\lfloor \Upsilon \rfloor} (\sigma_a - \sigma_{\lfloor \Upsilon \rfloor +1}) | \Sigma_a \varphi |^2 \\
\geq & \ \sigma_{\lfloor \Upsilon \rfloor +1}  | \varphi^{\bigodot^{2} V^{1,0}} |^2 + \frac{1}{\Upsilon} \sum_{a=1}^{\lfloor \Upsilon \rfloor} (\sigma_a - \sigma_{\lfloor \Upsilon \rfloor +1} )  | \varphi^{\bigodot^{2} V^{1,0}} |^2 \\
= & \ \frac{1}{\Upsilon} \left( (\Upsilon - \lfloor \Upsilon \rfloor) \sigma_{\lfloor \Upsilon \rfloor +1} + \sigma_{\lfloor \Upsilon \rfloor} + \ldots + \sigma_1  \right)  | \varphi^{\bigodot^{2} V^{1,0}} |^2 \\
\geq & \ \kappa | \varphi^{\bigodot^{2} V^{1,0}} |^2.
\end{align*} 
For the final claim, note that nonnegativity follows by setting $\kappa=0.$ If furthermore $\mathcal{C}$ is $\Upsilon$-positive, then the above computation shows 
\begin{align*}
    g(\Ric_L(\varphi), \varphi) \geq \frac{2}{\Upsilon} \left( \sigma_1 + \ldots + \sigma_{\lfloor \Upsilon \rfloor} + \left( \Upsilon - \lfloor \Upsilon \rfloor  \right) \sigma_{\lfloor \Upsilon\rfloor +1} \right) c(p,q) | \varphi|^2 >0
\end{align*}
unless $\varphi = 0,$ since $| \varphi^{\bigodot^{2} V^{1,0}} |^2 = c(p,q) | \varphi|^2$ for primitive forms according to Lemma~\ref{NormHolomSymHatForms}.
\end{proof}

Before proving our main theorems, we begin with a local holonomy classification for $\pm \frac n2$- nonnegative Calabi curvature operators.

\begin{thm}\label{thm:main-theorem-2.2}
If $X^n$ is a (not necessarily complete) K\"ahler manifold with $\mathcal{C}$ or $-\mathcal{C}$ being  $\frac{n}{2}$-nonnegative, then one of the following holds.

\begin{itemize}
\item[(i)] The holonomy is irreducible and is either $\msf{U}(n)$ or the space is locally  isometric to the complex quadric $\msf{SO}(n+2)/ \left( \msf{SO}(2) \times \msf{SO}(n) \right)$ with its symmetric metric, or its dual $\mathsf{SO}_0(n,2) /( \mathsf{SO}(2)\times \mathsf{SO}(n))$.
\item[(ii)] The holonomy is reducible and given by  $\msf{U}(n_1)\times \ldots \times \msf{U}(n_k)$ where $n_1+\ldots+n_k\leq n$.
\end{itemize}
\end{thm}

\begin{proof}
Starting with case (i), suppose that the holonomy is irreducible. If $X$ is not locally symmetric, then from Berger's classification, the restricted holonomy must be $\mathsf{U}(n), \mathsf{SU}(n)$ or $\mathsf{Sp}(n)$. The latter two imply $X$ is Ricci-flat, but since $\pm \cop$ is $\frac n2$-nonnegative, this in turn implies that $X^n$ is flat, contradicting irreducibility, so $X$ must have holonomy $\mathsf{U}(n)$. If $X$ is locally symmetric, then the eigenvalues of $\cop$ are known by \cite{CV,BCV}. In particular, the only Hermitian symmetric spaces with $\frac{n}{2}$-nonnegative $\pm\cop$ are the complex quadric $\mathsf{SO}(n+2)/ \left( \mathsf{SO}(2) \times \mathsf{SO}(n) \right)$ and its dual $\mathsf{SO}_0(n,2) / (\mathsf{SO}(2)\times \mathsf{SO}(n))$, which completes the proof of part (i).

For part (ii), suppose that the holonomy representation is reducible. Then some neighborhood around each $x\in X$ is biholomorphically isometric to a product $X_0^{n_0} \times X_1^{n_1} \times \ldots \times X_k^{n_k}$ of K\"ahler manifolds where $X_0$ is flat and each $X_i$, $1 \leq i \leq k$, has irreducible holonomy representation. It is easily checked that the Calabi curvature operator vanishes on the subbundle $\bigoplus_{i < j} T^{1,0}X_i \odot T^{1,0}X_j$, hence $\dim \ker \cop \geq d := n_0 + \sum_{i < j}n_i n_j$. It follows that the operator $\pm \cop_1 \oplus \ldots \oplus \cop_k \in \operatorname{End}\left(\bigoplus_{i} T^{1,0}X_i \odot T^{1,0}X_i\right)$ is $\left( \frac n2 - d \right)$-nonnegative, where each $\cop_i$ is the Calabi curvature operator of $X_i$. However since $n = n_0 + \sum_in_i$, it holds that $\frac{n}{2} - d \leq 0$, so $\pm \cop_1 \oplus \ldots \oplus \cop_k$, and hence each $\pm \cop_i, 1 \leq i \leq k$, is actually nonnegative. Repeating the argument of part (i) for each factor implies $X_i$ has holonomy $\mathsf{U}(n_i)$ or is locally symmetric. However, by \cite{CV,BCV}, the only irreducible Hermitian symmetric spaces with $\pm \cop$ nonnegative are complex projective space or complex hyperbolic space, which have holonomy $\mathsf{U}(n)$. Thus, the holonomy is given by $\mathsf{U}(n_1)\times \ldots \times \mathsf{U}(n_k)$.
\end{proof}

We now turn to proving our main theorems for the Calabi curvature operator. \vspace{2mm}

\textbf{Proof of Theorem~\ref{thm:main-theorem-3}.} Suppose $X$ is a compact K\"ahler manifold with $\Upsilon_{p,q}$-nonnegative Calabi curvature operator for some pair $(p,q)$. If $\varphi \in \Omega^{p,q}_X \oplus \Omega^{q,p}_X$ is a real primitive harmonic form, then the Bochner formula \eqref{prelim:Bochner-formula} and Proposition~\ref{WeightPrinciple} imply
\[
g(\nabla^*\nabla \varphi,\varphi) = -g(\operatorname{Ric}_L(\varphi),\varphi)\leq 0,
\]
hence $0 \geq \int_X g(\nabla^*\nabla \varphi,\varphi) = \int_X |\nabla \varphi|^2$, so $\varphi$ is parallel. Moreover, $g(\operatorname{Ric}_L (\varphi) ,\varphi) = 0$. If the Calabi curvature operator is in addition $\Upsilon_{p,q}$-positive, then Proposition~\ref{WeightPrinciple} implies $\varphi$ must vanish identically. \hfill \qed \vspace{2mm}

\textbf{Proof of Theorem~\ref{thm:main-theorem-1}.} By Serre duality and the Hodge decomposition theorem for K\"ahler manifolds, it suffices to show that all real primitive harmonic $(p,q)$-forms vanish whenever $1 \leq p + q \leq n$. This will follow from Theorem~\ref{thm:main-theorem-3} once we show that $\Upsilon_{p,q} \geq \frac{n}{2}$ for all $(p,q)$ such that $1 \leq p + q \leq n$. This is equivalent to showing
\[
n\left(p+q - 2 \min\left(p,q,\frac{\sqrt{pq}}{2}\right) - 1\right) + p+q - 2pq \geq 0,
\]
which is elementary. \hfill \qed \vspace{2mm}

\textbf{Proof of Theorem~\ref{thm:main-theorem-2}.}
First, by Theorem~\ref{thm:main-theorem-3} and the fact that $\Upsilon_{p,q} \geq \frac{n}{2}$ for all $1 \leq p+q \leq n$, all real primitive harmonic forms on $X$ are parallel. Starting with part (i), suppose that the holonomy of $X$ is irreducible. By Theorem~\ref{thm:main-theorem-2.2}, either the holonomy of $X$ is $\mathsf{U}(n)$, or $X$ is locally isometric to $\mathsf{SO}(n+2)/ \left( \mathsf{SO}(2)\times \mathsf{SO}(n+2) \right)$. If we are in the former situation, then all primitive parallel forms vanish. Hence all real primitive harmonic forms vanish and so $X$ must have the rational cohomology of $\mathbf{P}^n$. On the other hand, if $X$ is locally isometric to the complex quadric, then it has positive Ricci curvature, hence it is Fano and therefore simply connected, so the isometry is global.

For part (ii), suppose the holonomy of $X$ is reducible. By the proof of Theorem~\ref{thm:main-theorem-2.2}, we have that $\cop$ is nonnegative, and the result follows by Remark~\ref{rmk:NonNegativeCalabi}.\hfill \qed

\section{The K\"ahler curvature operator and proof of Theorem~\ref{thm:main-theorem-4}}
\label{SectionKaehlerEinstein}

In this section,  we will show that the results achieved in \cite{PW2} can be improved if the metric is K\"ahler--Einstein and prove Theorem~\ref{thm:main-theorem-4}. \vspace{2mm}

Let $(V,J,g)$ be a Euclidean vector space of real dimension $2n$ with a compatible complex structure. Suppose that 
\begin{align*}
    \mathfrak{K} \colon \Ext^{1,1} V \to \Ext^{1,1} V
\end{align*}
is an algebraic K\"ahler curvature operator and let $\lambda_1 \leq \ldots \leq \lambda_{n^2}$ denote its eigenvalues. 

If $\mathfrak{K}$ is in addition K\"ahler-Einstein, $\Ric = \lambda g,$ then 
\begin{align*}
    \lambda = \frac{\scal}{2n} = \frac{\tr( \mathfrak{K})}{n} = \frac{1}{n} \sum\nolimits_{\alpha} \lambda_{\alpha}
\end{align*}
and the bi-vector corresponding to the K\"ahler form, $\omega_K = \frac{\sqrt{-1}}{2} \sum_{a=1}^n Z_a \wedge \overline{Z_a},$ is an eigenvector of $\mathfrak{K}$ with eigenvalue $\lambda.$ In particular, $\mathfrak{K}$ induces an operator 
\begin{align*}
    \mathfrak{K}_{|\mathfrak{su}(n)} \colon \mathfrak{su}(n) \to \mathfrak{su}(n).
\end{align*}
It will be useful to note that, since $\frac{1}{2} \scal = \tr(\mathfrak{K}) = \lambda + \tr(\mathfrak{K}_{| \mathfrak{su}(n)}) = \frac{1}{2n} \scal + \tr(\mathfrak{K}_{| \mathfrak{su}(n)})$, we have 
\begin{align*}
    \tr \left( \mathfrak{K}_{|\mathfrak{su}(n)} \right) = (n-1)\lambda. 
\end{align*}

\begin{lemma}
\label{suHatPrimitiveForm}
For a primitive $(p,q)$-form $\varphi \in \Ext^{p,q}V$,  we have 
\begin{align*}
\left| \varphi^{\mathfrak{su}} \right|^2 = \left( 2pq + (p+q)(n+1-(p+q)) - \frac{(p-q)^2}{n} \right) | \varphi |^2.
\end{align*}
\end{lemma}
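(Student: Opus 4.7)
The plan is to exploit the orthogonal decomposition $\mathfrak{u}(n) = \mathfrak{su}(n) \oplus \mathbb{R}\omega_K$ of the K\"ahler Lie algebra, where the K\"ahler form bivector $\omega_K = \frac{\sqrt{-1}}{2}\sum_a Z_a \wedge \overline{Z_a}$ spans the abelian factor. This orthogonality gives $|\varphi^{\mathfrak{su}}|^2 = |\varphi^{\mathfrak{u}}|^2 - |\varphi^{\mathbb{R}\omega_K}|^2$, so the task splits into computing $|\varphi^{\mathfrak{u}}|^2$ and the rank-one $\omega_K$-contribution separately.

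The $\omega_K$ term is a short computation. A direct calculation of the derivation action shows $\omega_K Z^a = \frac{\sqrt{-1}}{2} Z^a$ and $\omega_K \overline{Z^a} = -\frac{\sqrt{-1}}{2} \overline{Z^a}$, so by the Leibniz rule $\omega_K \varphi = \frac{\sqrt{-1}(p-q)}{2}\varphi$ for any $(p,q)$-form $\varphi$. Combined with the evaluation of $|\omega_K|^2$ from $g(Z_a \wedge \overline{Z_b}, \overline{Z_c} \wedge Z_d) = 2\delta_{ac}\delta_{bd}$, this yields $|\varphi^{\mathbb{R}\omega_K}|^2 = |\omega_K\varphi|^2/|\omega_K|^2$ as an explicit multiple of $(p-q)^2/n \cdot |\varphi|^2$, which supplies the $-(p-q)^2/n$ correction in the stated formula.

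The bulk of the work is computing $|\varphi^{\mathfrak{u}}|^2$, for which I would mirror the strategy used in Lemma~\ref{NormHolomSymHatForms}. Pick the unitary basis $\Xi_{ab} = \frac{1}{\sqrt{2}}Z_a \wedge \overline{Z_b}$ of the complexified $\Ext^{1,1}V$ and convert $|\varphi^{\mathfrak{u}}|^2$ into $\tfrac{1}{2}\sum_{a,b}|(Z_a \wedge \overline{Z_b})\varphi|^2$. Expanding $\varphi$ in the monomial basis $Z^K$ of Remark~\ref{NotationForForms}, the derivation action of $Z_a \wedge \overline{Z_b}$ replaces each $Z^{i_k}$ in $I_K$ by $\delta_{a i_k} Z^b$ and each $\overline{Z^{j_l}}$ in $J_K$ by $-\delta_{b j_l}\overline{Z^a}$. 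Squaring and summing over $a,b$ produces four families of index-matched contributions, depending on which of $I_K$, $J_K$ contains $a$ and $b$, and after careful aggregation these collect into a multiple of $|\varphi|^2$ with combinatorial factor of the form $2pq + (p+q)(n+1-(p+q))$, plus a contraction term involving $\sum_{a,b}|\iota_{Z_a}\iota_{\overline{Z_b}}\varphi|^2$ together with a residual Lefschetz-trace piece.

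For primitive $\varphi$ one has $\Lambda\varphi = 0$, and Proposition~\ref{NormZBarZinserted} evaluates $(p+q)(p+q-1)\sum_{a,b}|\iota_{Z_a}\iota_{\overline{Z_b}}\varphi|^2 = pq|\varphi|^2$, absorbing the contraction term into a further multiple of $|\varphi|^2$. Subtracting the $\omega_K$ contribution then yields the stated formula. The principal technical obstacle is the bookkeeping in the direct expansion of $|(Z_a \wedge \overline{Z_b})\varphi|^2$: unlike the $\bigodot^2 V^{1,0}$ setting of Lemma~\ref{NormHolomSymHatForms}, where the action shifts the bigrading $(p,q)\to(p-1,q+1)$ and the two substitutions land in different form components, the $\mathfrak{u}(n)$-action preserves the bigrading, so both substitutions happen within the same form and generate additional interference cross terms that must be tracked with care.
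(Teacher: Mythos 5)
Your skeleton is the same as the paper's: decompose $\mathfrak{u}(n)=\mathfrak{su}(n)\oplus\mathbf{R}\,\omega_K$ orthogonally and compute $\left|\varphi^{\mathfrak{su}}\right|^2=\left|\varphi^{\mathfrak{u}}\right|^2-|\omega_K\varphi|^2/|\omega_K|^2$. The paper obtains both inputs by citation, namely $\left|\varphi^{\mathfrak{u}}\right|^2=\bigl(2pq+(p+q)(n+1-(p+q))\bigr)|\varphi|^2$ from \cite[Proposition 3.2]{PW2} and the fact that $\omega_K$ acts on $(p,q)$-forms by $\sqrt{-1}(p-q)$ from \cite[Proposition 1.3]{PW2}, whereas you propose to re-derive $\left|\varphi^{\mathfrak{u}}\right|^2$ by a direct expansion modeled on Lemma~\ref{NormHolomSymHatForms}; that is a legitimate, more self-contained variant, and the interference terms you flag do reduce for primitive forms via $\Lambda\varphi=0$ and Proposition~\ref{NormZBarZinserted}, so that part is bookkeeping rather than a missing idea.

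The genuine gap is a normalization inconsistency, visible already in your own numbers. In this paper's conventions (wedge without a factor $\tfrac12$, tensor norm, action \eqref{DefinitionActionOnTensors}) your intermediate formulas are correct: $\omega_K\varphi=\tfrac{\sqrt{-1}}{2}(p-q)\varphi$ and $g(Z_a\wedge\overline{Z_b},\overline{Z_c}\wedge Z_d)=2\delta_{ac}\delta_{bd}$, hence $|\omega_K|^2=\tfrac n2$. But then $|\omega_K\varphi|^2/|\omega_K|^2=\tfrac{(p-q)^2}{2n}|\varphi|^2$, not the $\tfrac{(p-q)^2}{n}|\varphi|^2$ correction you claim it supplies; and in the same conventions $\left|\varphi^{\mathfrak{u}}\right|^2=\tfrac12\sum_{a,b}|(Z_a\wedge\overline{Z_b})\varphi|^2$ comes out as $\tfrac12\bigl(2pq+(p+q)(n+1-(p+q))\bigr)|\varphi|^2$, half the constant you assert (test $\varphi=Z^1$, $p=1$, $q=0$: since $(Z_a\wedge\overline{Z_b})Z^1=\delta_{a1}Z^b$, the sum equals $n$ and $\left|\varphi^{\mathfrak{u}}\right|^2=\tfrac n2$, while the quoted constant is $n$). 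Carried out consistently, your computation therefore yields the stated identity with an overall factor $\tfrac12$ on the right-hand side, not the lemma as written: the lemma is phrased in the normalization of \cite{PW2}, in which $\omega_K\varphi=\sqrt{-1}(p-q)\varphi$ and $|\omega_K|^2=n$, and this is also how it is consumed in Proposition~\ref{PropositionVanishingFormsKE}, where the compensating factor $\tfrac12$ in front of $g(\Ric_L(\varphi),\overline{\varphi})$ absorbs the tensor-norm discrepancy (cf.\ Remark~\ref{TranslationRicLCurvOperator}). To repair the argument, fix one convention and use it for both terms: either adopt the normalization of \cite{PW2} throughout (and redo your $\omega_K$ computation there), or keep this paper's conventions, prove the identity with the factor $\tfrac12$, and adjust accordingly where it enters the Bochner estimate.
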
 
\begin{proof}
We recall from \cite[Proposition 3.2]{PW2} that a primitive form $\varphi \in \Ext^{p,q}V$ satisfies
\begin{align*}
\left| \varphi^{\mathfrak{u}} \right|^2 = \left( 2pq + (p+q)(n+1-(p+q) \right) | \varphi |^2.
\end{align*}
Furthermore, it follows from \cite[Proposition 1.3]{PW2} that the bi-vector corresponding to the K\"ahler form, $\omega_K = \frac{\sqrt{-1}}{2} \sum_{a=1}^n Z_a \wedge \overline{Z_a},$ acts on $(p,q)$-forms by multiplication with $\sqrt{-1} (p-q).$ Note that, moreover, $| \omega_K |^2 = n$ and hence
\begin{align*}
\left| (\omega_K) \varphi \right|^2  = \frac{(p-q)^2}{n} | \omega_K |^2 | \varphi |^2.
\end{align*} 
Thus, the decomposition  $\left| \varphi^{\mathfrak{u}} \right|^2 = \left| \frac{1}{\sqrt{n}} (\omega_K) \varphi \right|^2 + \left| \varphi^{\mathfrak{su}} \right|^2$ implies
\begin{align*}
\left| \varphi^{\mathfrak{su}} \right|^2 = \left( 2pq + (p+q)(n+1-(p+q) ) \right) | {\varphi} |^2 - \frac{(p-q)^2}{n} | {\varphi} |^2
\end{align*} 
for primitive $\varphi \in \Ext^{p,q}V$.
\end{proof}

\begin{remark}
\label{UnEstimateForms}
    \normalfont
    If $\varphi \in \Ext^{p,q}V$ is primitive and $L \in \mathfrak{u}(n)$, then \cite[Proposition 3.4]{PW2} shows that
\begin{align*}
    | L \varphi |^2 \leq (p+q) | L |^2 | {\varphi} |^2.
\end{align*}
\end{remark}

\begin{proposition}
\label{PropositionVanishingFormsKE}
    Suppose that $\mathfrak{K} \colon \Ext^{1,1}V \to \Ext^{1,1}V$ is an algebraic K\"ahler-Einstein curvature operator over $(V,J,g).$ If the induced curvature operator $\mathfrak{K}_{| \mathfrak{su}(n)} \colon \mathfrak{su}(n) \to \mathfrak{su}(n)$ is $\Gamma_{p,q}$-nonnegative, where
    \begin{align*}
        \Gamma_{p,q} = \frac{n(n^2-1)(p+q) - 2n(n-1)pq}{n(n-1)(p+q) + (p-q)^2},
    \end{align*}
    and $\varphi \in \Ext^{p,q}V$ is a primitive form, then
    \begin{align*}
        g( \Ric_L( \varphi), \overline{\varphi} ) \geq 0.
    \end{align*}    
\end{proposition}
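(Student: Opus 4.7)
The plan is to adapt the weight-principle argument of Proposition~\ref{WeightPrinciple} to the splitting $\mathfrak{u}(n) = \mathbf{R}\,\omega_K \oplus \mathfrak{su}(n)$, using the K\"ahler--Einstein trace identity $\operatorname{tr}(\mathfrak{K}_{|\mathfrak{su}(n)}) = (n-1)\lambda$ to absorb the contribution of the $\omega_K$-direction into a uniform shift of the weights assigned to the eigenvectors of $\mathfrak{K}_{|\mathfrak{su}(n)}$.

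First I would invoke the $\mathfrak{K}$-analog of Lemma~\ref{CurvatureTermWithCalabi}, established in \cite{PW2}: for a unitary eigenbasis $\{\Xi_\alpha\}$ of $\mathfrak{K}$ on $\Ext^{1,1}V$ with eigenvalues $\lambda_\alpha$,
\begin{align*}
g(\operatorname{Ric}_L(\varphi), \overline{\varphi}) = 2\sum_\alpha \lambda_\alpha |\Xi_\alpha \varphi|^2.
\end{align*}
The Einstein condition makes $\tfrac{1}{\sqrt{n}}\omega_K$ a unit eigenvector with eigenvalue $\lambda$, and allows the remaining eigenvectors to be chosen inside $\mathfrak{su}(n)$ with eigenvalues $\mu_1 \leq \ldots \leq \mu_{n^2-1}$. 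Since $\omega_K$ acts on $(p,q)$-forms by multiplication with $\sqrt{-1}(p-q)$, this separates as
\begin{align*}
g(\operatorname{Ric}_L(\varphi), \overline{\varphi}) = 2\lambda\,\tfrac{(p-q)^2}{n}|\varphi|^2 + 2\sum_{\alpha=1}^{n^2-1} \mu_\alpha |\Xi_\alpha \varphi|^2.
\end{align*}

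Next, substituting $\lambda = \tfrac{1}{n-1}\sum_\alpha \mu_\alpha$ rewrites the right-hand side as $2\sum_\alpha \mu_\alpha w_\alpha$, with modified weights
\begin{align*}
w_\alpha := |\Xi_\alpha \varphi|^2 + \tfrac{(p-q)^2}{n(n-1)}|\varphi|^2.
\end{align*}
Remark~\ref{UnEstimateForms} yields the uniform upper bound $w_\alpha \leq W$ with $W := \tfrac{n(n-1)(p+q) + (p-q)^2}{n(n-1)}|\varphi|^2$, while Lemma~\ref{suHatPrimitiveForm} together with the identity $2pq + (p-q)^2 = p^2 + q^2$ gives the total
\begin{align*}
\sum_\alpha w_\alpha = |\varphi^{\mathfrak{su}}|^2 + \tfrac{(n+1)(p-q)^2}{n}|\varphi|^2 = \bigl((p+q)(n+1) - 2pq\bigr)|\varphi|^2.
\end{align*}
A direct check then reveals that the ratio $\sum_\alpha w_\alpha / W$ equals exactly $\Gamma_{p,q}$; this algebraic coincidence is what dictates the somewhat opaque form of $\Gamma_{p,q}$.

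Finally, I would run the weight-principle bookkeeping of Proposition~\ref{WeightPrinciple} verbatim with the eigenvalues $\mu_\alpha$ of $\mathfrak{K}_{|\mathfrak{su}(n)}$ and the weights $w_\alpha$, yielding
\begin{align*}
\sum_\alpha \mu_\alpha w_\alpha \geq W\Bigl( \mu_1 + \ldots + \mu_{\lfloor\Gamma_{p,q}\rfloor} + (\Gamma_{p,q} - \lfloor\Gamma_{p,q}\rfloor)\mu_{\lfloor\Gamma_{p,q}\rfloor+1}\Bigr) \geq 0,
\end{align*}
where the last inequality is the $\Gamma_{p,q}$-nonnegativity hypothesis on $\mathfrak{K}_{|\mathfrak{su}(n)}$. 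The only nontrivial content beyond the standard weight-principle machinery is the algebraic cancellation that makes $\sum_\alpha w_\alpha / W$ equal $\Gamma_{p,q}$ exactly; finding the correct normalization $w_\alpha$ so that the trace term merges cleanly with the $|\Xi_\alpha \varphi|^2$ is the main conceptual step, while the remaining computations are routine.
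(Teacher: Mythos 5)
Your proposal is correct and follows essentially the same route as the paper: the splitting $\mathfrak{u}(n)=\mathbf{R}\,\omega_K\oplus\mathfrak{su}(n)$, the curvature-term formula from \cite{PW2}, the substitution $\lambda=\tfrac{1}{n-1}\tr(\mathfrak{K}_{|\mathfrak{su}(n)})$ producing the shifted weights $w_\alpha=|\Xi_\alpha\varphi|^2+\tfrac{(p-q)^2}{n(n-1)}|\varphi|^2$, and the weight principle with total weight $(p+q)(n+1)-2pq$ and highest weight $(p+q)+\tfrac{(p-q)^2}{n(n-1)}$, whose ratio is $\Gamma_{p,q}$. This is exactly the paper's argument, so nothing further is needed.
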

\begin{proof}
    Since $\mathfrak{K}$ is K\"ahler-Einstein, we can find an orthonormal eigenbasis $\{ \Xi_a \}$ for $\mathfrak{K}_{|\mathfrak{su}(n)}$ with corresponding eigenvalues $\{ \mathfrak{\lambda}_{\alpha} \}.$ Hence, \cite[Proposition 1.6]{PW2} shows that
    \begin{align*}
       \frac{1}{2} g( \Ric_L( \varphi), \overline{\varphi} ) = \lambda \left| \frac{1}{\sqrt{n}} (\omega_K)\varphi \right|^2  + \sum_{\alpha} \lambda_{\alpha} | \Xi_{\alpha} \varphi |^2 ,
    \end{align*}
    where the additional factor of $\frac{1}{2}$ stems from the tensor norm convention as in Remark~\ref{TranslationRicLCurvOperator}. The previous computations thus imply
    \begin{align*}
        \frac{1}{2} g( \Ric_L( \varphi), \overline{\varphi} ) 
        = & \ \lambda \frac{(p-q)^2}{n} | {\varphi} |^2 + \sum_{\alpha} \lambda_{\alpha} | \Xi_{\alpha} \varphi |^2 \\
        = & \ \frac{(p-q)^2}{n(n-1)} \tr( \mathfrak{K}_{|\mathfrak{su}(n)} )  | {\varphi} |^2 + \sum_{\alpha} \lambda_{\alpha} | \Xi_{\alpha} \varphi |^2 \\
        = & \sum_{\alpha} \lambda_{\alpha} \left( \frac{(p-q)^2}{n(n-1)} | \varphi |^2 + | \Xi_{\alpha} \varphi |^2 \right).
    \end{align*}
Suppose that $| \varphi|^2 =1.$ In order to estimate the sum, we use the weight principle \cite[Theorem 3.6]{NPWBettiNumbersAndCOSK}. That is, we need to compare the total weight 
\begin{align*}
    \sum_{\alpha} \left( \frac{(p-q)^2}{n(n-1)} | \varphi |^2 + | \Xi_{\alpha} \varphi |^2 \right) = & \ \left(1 + \frac{1}{n} \right) (p-q)^2 | \varphi |^2 + \left| \varphi^{\mathfrak{su}(n)} \right|^2 \\ 
    = & \ (p-q)^2 + 2pq + (p+q)(n+1-(p+q))  \\
    = & \ (p+q)(n+1) - 2pq
\end{align*}
to the highest weight, which we can estimate by
\begin{align*}
    \frac{(p-q)^2}{n(n-1)} | \varphi |^2 + | \Xi_{\alpha} \varphi |^2 \leq \frac{(p-q)^2}{n(n-1)} + (p+q).
\end{align*}
Note that we used Lemma~\ref{suHatPrimitiveForm} and Remark~\ref{UnEstimateForms} in the computation. The weight principle  \cite[Theorem 3.6]{NPWBettiNumbersAndCOSK} now says that the weighted sum
\begin{align*}
    \sum_{\alpha} \lambda_{\alpha} \left( \frac{(p-q)^2}{n(n-1)} | \varphi |^2 + | \Xi_{\alpha} \varphi |^2 \right)
\end{align*}
is nonnegative provided that 
\begin{align*}
    \lambda_1 + \ldots + \lambda_{ \lfloor \Gamma_{p,q} \rfloor} + \left( \Gamma_{p,q} - \lfloor \Gamma_{p,q} \rfloor \right) \lambda_{\lfloor \Gamma_{p,q} \rfloor + 1} \geq 0,
\end{align*}
where $\Gamma_{p,q}$ is the quotient of the total weight by the highest weight.
\end{proof}

\begin{remark}
    \normalfont
    Note that $\Gamma_{n,0}=\frac{n^2-1}{n}$ and $\Gamma_{p,p}=n+1-p.$
\end{remark}

\textbf{Proof of Theorem~\ref{thm:main-theorem-4}.} Any  harmonic primitive form $\varphi \in \Omega_X^{p,q}$ satisfies the Bochner identity
\begin{align*}
    \Delta \frac{1}{2} | \varphi |^2 = | \nabla \varphi |^2 + g ( \Ric_L( \varphi), \overline{\varphi} ).
\end{align*}
If $\Gamma_{p,q} \geq 0,$ then Proposition~\ref{PropositionVanishingFormsKE} implies that $g ( \Ric_L( \varphi), \overline{\varphi} ) \geq 0$ and thus $\varphi$ is parallel. If $\Gamma_{p,q}>0,$ then $\varphi$ vanishes. To show that all primitive harmonic forms vanish, we may restrict to $p+q \leq n$ due to Serre duality. Theorem~\ref{thm:main-theorem-4} now follows from the observation that all $\Gamma_{p,q}$ are nonnegative (respectively positive), if $\mathfrak{K}_{| \mathfrak{su}(n)} \colon \Ext^{1,1}_0 TX \to \Ext^{1,1}_0 TX$ is $\left( \frac{n}{2} + 1 \right)$-nonnegative (respectively $\left( \frac{n}{2} + 1 \right)$-positive). \hfill $\Box$


\end{document}